  \definecolor{blue}{rgb}{0,0,1}
 \definecolor{red}{rgb}{1,0,.2}
\newenvironment{theorem*}[1]
 {\theoremvar}
 {\endtheoremvar}
\newenvironment{corollary*}[1]
 {\corollaryvar}
 {\endtheoremvar}
\newtheorem{theorem}{Theorem}[section]
\newtheorem{lemma}[theorem]{Lemma}
\newtheorem{corollary}[theorem]{Corollary}
\newtheorem{question}[theorem]{Question}
\newtheorem{conjecture}[theorem]{Conjecture}
\theoremstyle{definition}
\newtheorem{definition}[theorem]{Definition}
\newtheorem{example}[theorem]{Example}
\newcommand{\R}{\mathbb{R}}
\newcommand{\Z}{\mathbb{Z}}
\newcommand{\N}{\mathbb{N}}
\theoremstyle{remark}
\newcommand\rsmraise[1]{%
  \ifx#1\displaystyle .8\else
    \ifx#1\textstyle .8\else
      \ifx#1\scriptstyle .6\else
        .45%
      \fi
    \fi
  \fi}
\begin{document}

\title[Interior of distance trees over thin Cantor sets]{Interior of distance trees over thin Cantor sets}

\author{Yeonwook Jung}
\address{Department of Mathematics, University of California, Irvine, CA 92697, USA}
\curraddr{}
\email{yeonwoj1@uci.edu}

\author{Krystal Taylor}
\address{Department of Mathematics, The Ohio State University, Columbus, OH 43210-1174, USA}
\email{taylor.2952@osu.edu}
\subjclass[2020]{Mathematics Subject Classification. 28A75, 28A80}
\begin{abstract}
It is known that if a compact set $E$ in $\mathbb{R}^d$ has Hausdorff dimension greater than $(d+1)/2$, then its $n$-chain distance set $$\Delta^n(E) = \left\{\left(\left|x^1-x^2\right|,\cdots, \left|x^{n}- x^{n+1}\right|\right)\in \mathbb{R}^n: x^i \in E, x^i\neq x^j \text{ for } i\neq j \right\}$$ has nonempty interior for any $n\in \mathbb{N}$. In this paper, we prove that for every Cantor set $K\subset \mathbb{R}^d$ and for every $n\in\mathbb{N}$, there exists $\widetilde{K}\subset \mathbb{R}^d$ such that the pinned $n$-chain distance set of $K\times \widetilde{K}\subset \mathbb{R}^{2d}$ has nonempty interior, and hence, that $\Delta^n(K\times \widetilde{K})$ has nonempty interior. Our results do not depend on the Newhouse gap lemma but rather on the containment lemma recently introduced by Jung and Lai. Our results generalize three-fold: to arbitrary finite trees, to higher dimensions, and to maps that have non-vanishing partials. As an application, we provide a class of examples of Cantor sets $E\subset \mathbb{R}^{2d}$ so that for any $s\geq d$, $\dim_{\rm H}(E)= s$ and $\Delta_x^n(E)^\circ{}\neq \varnothing$ for some $x\in E$.
\end{abstract}
\maketitle

\section{Introduction and Historical Background}
The seminal Falconer distance conjecture asks for a lower bound on the Hausdorff dimension of a compact set $E$ of $\R^d$ that guarantees the distance set 
$$\Delta(E) = \{ |x-y| : x,y\in E\}$$ has positive Lebesgue measure. Falconer proved that $\frac{d+1}{2}$ suffices, and he gave a constructive example to show that the conclusion need not hold for sets with Hausdorff dimension less than $\frac{d}{2}$ \cite{Falc85paper}. The best known result in the plane is due to Guth, Iosevich, Ou, and Wang and shows that $\dim_{\rm H} (E) >\frac54$ suffices \cite{GIOW}
even for the pinned distance sets defined below; for a summary recent improvements in higher dimensions, see the introduction in \cite{BFOP}. The Falconer distance problem also has a number of variants, including the study of interior of distance sets, pinned distance sets, and more intricate distance sets over graphs. 

\smallskip

For the interior of the distance set, Mattila and Sj\"olin proved that $\dim_{\rm H}(E) > \frac{d+1}{2}$ suffices to guarantee  that $\Delta(E)$ has an interior \cite{MS}. An extension and a short analytic proof of this result was provided subsequently by Iosevich, Mourgoglu, and Taylor in \cite{IMT12}, where the authors show that the Euclidean distance can be replaced by more general metrics. 

\smallskip

The \textit{pinned distance set} of a compact set $E\subset \R^d$ at a pin $x\in E$ is defined by $$\Delta_{x}(E) = \{ |x-y| : y\in E\}.$$ Peres and Schlag \cite{PS} proved that $\dim_{\rm H}(E)> \frac{d+2}{2}$ suffices to guarantee that $\Delta_x(E)$ has an interior point for some $x\in E\subset \R^d$ for $d\geq 3$.  A recent exciting improvement in dimensions $d=2$ and $d=3$ due to Borges, Foster, Ou, and Palsson \cite{BFOP} shows that for compact sets $E$:
\begin{enumerate}[(i)]
    \item if $E\subset \R^2$ and $\dim_{\rm H}(E)> \frac{7}{4}$, then $\Delta_{x}(E)^{\circ}\neq \varnothing$ for some $x\in E$ (see Figure \ref{interior number line});
\item if $E\subset \R^3$, then $\dim_{\rm H}(E)> \frac{12}{5}$ suffices to make the same conclusion. 
\end{enumerate}

\smallskip

\begin{figure}[h]
\begin{tikzpicture}
  \draw (0,0) -- (4.5,0) node[right] {\,\,$\dim_{\rm H}(C_1\times C_2)$\,\,\,};
  
  \draw (0,0) -- (0,0.2) node[above] {1};
  
  \draw (4,0) -- (4,0.2) node[above] {2};
  
  \draw (3,0) -- (3,0.2) node[above] {$\frac{7}{4}$};
  
  \draw [decorate,decoration={brace,mirror,amplitude=6pt}]
    (3,-0.3) -- (4,-0.3) node[midway,below,yshift=-4pt]{$\Delta_x(E)^{\circ}\neq \varnothing$};
\end{tikzpicture}
\caption{If $E\subset \R^2$ is compact and $\dim_{\rm H}(E) >\frac74$, then the pinned distance set has nonempty interior for some $x\in E$. }
\label{interior number line}
\end{figure}
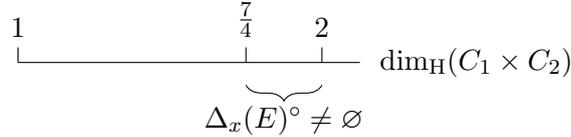

The \textit{$n$-chain distance set} of a compact set $E\subset \R^d$ is defined by \begin{equation*}\label{chain dist}
\Delta^n(E) := \left\{ (|x^1-x^2|, |x^2 - x^3|, \dots, |x^n - x^{n+1}|): x^i \in E, \,\, x^i \neq x^j \text{ for } i\neq j \right\}.
\end{equation*}
Bennett, Iosevich and Taylor prove in \cite{BIT} that $\Delta^n(E)$ has an interior point in $\R^n$ provided $\dim_{\rm H}(E) >\frac{d+1}{2}$. The results in \cite{BIT} also extend to trees (acyclic connected graphs) \cite{IT}. The proof technique in both \cite{BIT} and \cite{IT} relies on showing that the density on the distance set is a continuous function and the mapping properties of the spherical convolution operator $\sigma f*\mu$, where $\mu$ is a Frostman measure on $E$. 

\smallskip

The $n$-chain distance set was also investigated by Ou and Taylor in \cite{OT22}, where the authors proved that $\Delta^n(E)$ has positive Lebesgue measure provided $E\subset \R^2$ is a compact set satisfying $\dim_{\rm H}(E)>\frac54$. The proof leverages the result on pinned distance sets in \cite{GIOW}, and the proof technique also works in higher dimensions and extends to trees. 

\subsection{Improved results for product Cantor sets}\label{newhouse section}
There are known improvements to the aforementioned results involving the interior of pinned distance sets in the specific setting of product Cantor sets. Simon and Taylor, for instance, prove in \cite{simon_taylor} that if $C$ denotes a compact set of Newhouse thickness at least $1$, denoted $\tau(C)\geq 1$ (the middle-third Cantor set, for instance, satisfies this), then $\Delta_x(C\times C)$ has nonempty interior for any $x\in C\times C$. Using this result, it is possible to construct such a set $C\subset \R^2$ so that $\dim_{\rm H}(C)$ is arbitrarily close to $1$, obtaining a class of examples outside the reach of the result in \cite{BFOP} whose pinned distance sets have nonempty interior. 

\smallskip

For $n$-chain distance sets, McDonald and Taylor prove in \cite{McDonald-Taylor-finite-point-configurations} that $\Delta^n(K_1\times K_2)$, has nonempty interior provided $\tau(K_1)\cdot \tau(K_2)>1$. Further, the authors extend this result to include trees or finite acyclic graphs. This work is continued in \cite{McDonald_Taylor_2023_infinite_chains}, where distance sets corresponding to infinite trees and constant gap trees are investigated. In higher dimensions, Boone and Palsson use similar techniques to construct sets $C_a, C_b\subset \R^d$ so that $C= C_a\times C_b\subset \R^{2d}$ has Hausdorff dimension arbitrarily close to $ (2d-1)$ and $\Delta_x^n(C)$ has nonempty interior; see the examples in \cite[\S2]{BoonePalsson}. Also, Jung and Lai prove in \cite{jung-lai-interior-of-certain} that there exists a product Cantor set $K\subset \R^d$, for $d$ even, so that $\dim_{\rm H}(K) = \frac{d}{2}$ and $\Delta_x(K)$ has nonempty interior for some $x$ in $K$ \cite[Theorem 8.2]{jung-lai-interior-of-certain}. These results are summarized in Figure \ref{interior Cantor number line}.

\smallskip

We refer to \cite{Palis1993HyperbolicityAS, Yavicoli_Survey} or \cite{ST2025} for the definition and properties of Newhouse thickness.  Like Hausdorff dimension, Newhouse thickness has also played a role in the study of distance sets and finite point configurations \cite{BoonePalsson, Jiang, ST2025, yavicoli_patterns, Yavicoli_Gap_Lemma_Rd}.

\begin{figure}[h]
\begin{tikzpicture}
  \draw (0,0) -- (4.5,0) node[right] {\,\,$\dim_{\rm H}(E)$\,\,\,};

  \draw (0,0) -- (0,0.2) node[above] {$d$};

  \draw (4,0) -- (4,0.2) node[above] {$2d$};

\draw [decorate,decoration={brace,mirror,amplitude=6pt}]
    (0,-0.3) -- (4,-0.3) node[midway,below,yshift=-4pt]{$\Delta_x^n(C_1\times C_2)^{\circ{}}\neq \varnothing$};
\end{tikzpicture}
\caption{
For any $s>1$ and $n\geq 1$, there exists a Cantor set $C_1\times C_2\subset \R^{2d}$ so that $\dim_{\rm H}(C_1 \times C_2) =s$ and the pinned $n$-chain distance set has nonempty interior \cite{simon_taylor,BoonePalsson}. When $s=n=1$, this follows from \cite{jung-lai-interior-of-certain}. When $s=1$ and $n>1$, this follows from the present article.}
\label{interior Cantor number line}
\end{figure}

\subsection{The main idea of this paper}
A similarity between the results of McDonald and Taylor in \cite{McDonald-Taylor-finite-point-configurations, McDonald_Taylor_2023_infinite_chains} and those of Ou and Taylor in \cite{OT22} is that each of these results show that a sufficiently robust pinned distance result can be used to generate chains and trees. In particular, the \textit{pin wiggling lemma} in \cite{McDonald-Taylor-finite-point-configurations} shows that there exists an interval $I$ which is contained in $\Delta_x(K\times K)$, that remains in the pinned distance set if $x$ is perturbed by a small amount.

\smallskip

The main thrust of this paper is a new pin wiggling lemma based on the work of Jung and Lai. In particular, we build chains and trees in a product Cantor set setting and extend the nonempty interior results to \textit{pinned $n$-chain distance sets} defined by
\begin{align*}
\Delta_{x}^{n} (K\times \widetilde{K})
= \{ (|x-x^1|, |x^1-x^2|, \cdots, |x^{n-1} - x^n|) : x^i \in K\times \widetilde{K}, x^i \neq x^j \text{ for } i\neq j\},
\end{align*}
for $x\in K\times \widetilde{K}$. We also generalize our results to tree distance sets, which are defined in \S \ref{sec:main-results}.

\smallskip

More specifically, Jung and Lai prove that, given a Cantor set $K_1$ on $\R^d$, it is always possible to find another Cantor set $K_2$ so that the sum $g(K_1) +K_2$ (where g is a $C^1$ local diffeomorphism) has nonempty interior, and the existence of the interior is robust under small perturbation of the mapping. More generally, they show that the image set $H(z, K_1, K_2)$, where $H$ is some $C^1$ function on $\R^N \times \R^d\times \R^d$ with non-vanishing Jacobian, has nonempty interior for all $z$ in an open ball of $\R^N$ \cite[Theorem 1.1]{jung-lai-interior-of-certain}.

\smallskip
 
We apply this result with $$H(z, x, y) = \| (x,y) - z\|$$ to see that, given a set $K\subset \R$, there exists a Cantor set $\widetilde{K}\subset \R$ and an open neighborhood $U\subset \R^2$ so that $\bigcap_{z\in U} \Delta_z(K\times \widetilde{K})$ has interior. We then iteratively apply this result to build a set $\widetilde{K}$ for which $K\times \widetilde{K}$ has pinned chain and tree distance sets with interior. As a corollary, we prove that for any even $d$, there exists a Cantor set $K\subset \R^d$ so that the pinned chain and tree set of $K$ has nonempty interior for some $z\in K$. 

\smallskip
 
In previous works, chain and tree distance sets are built over selected skeleton points in $K\times \widetilde{K}$. The main hurdle in this work was that we cannot a priori choose skeleton points in $K\times \widetilde{K}$, since $\widetilde{K}$ is something we need to construct. Our solution is to choose the skeleton points from the diagonal line of $K\times K$, which then enables us to apply the above result to construct chains and trees and $\widetilde{K}$ simultaneously, and then we take a union of $K$ and $\widetilde{K}$.

\smallskip

Before formally stating our results, we end this section with a conjecture, which is a strengthening of Falconer's conjecture and is motivated by both the results stated above and the results of this paper. 
\begin{conjecture}[Interior variant of Falconer for pinned chain distance sets]
    Let $E\subset \R^d$ compact with $\dim_{\rm H}(E) >\frac{d}{2}$. Then, for every $n\in\N$, the pinned $n$-chain set has nonempty interior, $$\Delta^n_x(E)^{\circ{}} \neq \varnothing,$$
    for some $x\in E$. 
\end{conjecture}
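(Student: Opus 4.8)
The plan is to deduce the conjecture from two ingredients: a sufficiently \emph{robust} interior statement for the \emph{single} pinned distance set at the critical threshold $\dim_{\rm H}(E) > d/2$, and the chain-bootstrapping machinery already present in the literature. Concretely, suppose one knew the following: if $E\subset\R^d$ is compact with $\dim_{\rm H}(E) > d/2$, then there is a pin $x_0\in E$, a relatively open neighborhood $V$ of $x_0$ in $E$, and a nontrivial interval $I$, with $I\subset\Delta_x(E)$ for every $x\in V$; moreover the associated sliced measures depend continuously on the pin, in the sense used in \cite{BIT, IT, OT22}. Granting this, one builds an $n$-chain box inside $\Delta^n_{x_0}(E)$ by induction on the number of links: having placed $x^1,\dots,x^k$ so that the first $k$ link-lengths fill a box $I_1\times\cdots\times I_k$, one arranges that the admissible position $x^k$ lies inside the good neighborhood $V$, applies the robust pinned statement with pin $x^k$ to obtain the next interval $I_{k+1}$, and iterates; the continuity of the sliced measures is exactly what prevents the construction from degenerating as the pin moves. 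The product $I_1\times\cdots\times I_n$ is then an open box in $\Delta^n_{x_0}(E)$. Finally one removes the diagonal constraints $x^i\neq x^j$: for sets of positive Hausdorff dimension the set of length-tuples forcing a coincidence has measure zero, so an open sub-box of admissible tuples survives, exactly as in \cite{BIT, OT22, IT}.

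Thus the three steps are: (1) prove the robust pinned interior statement at threshold $d/2$; (2) run the inductive box-assembly, which must be uniform in the pin over $V$; (3) perform the distinctness cleanup. Steps (2) and (3) are by now essentially standard: they are the arguments of \cite{BIT, IT} for the interior version and of \cite{OT22} for the measure version, reorganized to track a moving pin rather than a fixed one, and the iteration scheme of the present paper provides a template for carrying out precisely this kind of bootstrapping in the Cantor-set setting.

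The genuine obstacle is Step (1): it is the interior variant of Falconer's distance conjecture, and is strictly stronger than the (still open) measure-theoretic statement that $\dim_{\rm H}(E) > d/2$ forces $|\Delta(E)| > 0$, with the uniformity in the pin stronger still. The best unconditional inputs available --- $\dim_{\rm H}(E) > 7/4$ in the plane \cite{BFOP}, $> 12/5$ in $\R^3$, and $> (d+1)/2$ in all dimensions \cite{MS, IMT12} --- all lie strictly above $d/2$, so the strategy above, executed honestly, only recovers the conjecture above those thresholds, which is already known through \cite{BIT, IT, OT22}. Closing the gap down to $d/2$ would require a new idea specific to the interior problem: either a Fourier-analytic estimate showing that spherical measures acquire enough regularity against Frostman measures of dimension just above $d/2$ --- a strengthening of the bounds behind \cite{MS, IMT12, GIOW} --- or an entirely different, more geometric mechanism. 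What makes the threshold $d/2$ plausible is precisely the family of product-Cantor examples of this paper and of \cite{simon_taylor, jung-lai-interior-of-certain, BoonePalsson}, where sets of dimension as low as $d$ in $\R^{2d}$ already have pinned $n$-chain sets with interior; but those constructions lean on the arithmetic self-similarity of Cantor sets, through the containment lemma of Jung and Lai or the Newhouse gap lemma, and it is not clear how to manufacture the analogous structure inside an arbitrary compact set of dimension just above $d/2$.
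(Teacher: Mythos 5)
The statement you are trying to prove is not proved in the paper at all: it is stated as a conjecture, and the authors' only contribution toward it is a family of supporting examples (the corollaries on product Cantor sets in $\R^{2d}$ of every Hausdorff dimension $s\geq d$ with nonempty-interior pinned chain sets). So there is no paper proof to match your argument against, and your proposal, as you yourself acknowledge, is not a proof either. The genuine gap is your Step (1): the ``robust pinned interior statement'' at threshold $d/2$ --- an interval $I\subset\Delta_x(E)$ for all pins $x$ in a relatively open subset of $E$, with continuity of the sliced measures --- is at least as strong as the $n=1$ case of the conjecture itself (pinned distance set with interior at $\dim_{\rm H}(E)>d/2$), and strictly stronger because of the required uniformity in the pin. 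Assuming it makes the reduction essentially circular with respect to the hard content: everything that is open about the conjecture has been packed into the hypothesis, and no known technique (the $(d+1)/2$ threshold of Mattila--Sj\"olin, the $7/4$ and $12/5$ pinned results of \cite{BFOP}, or the $L^2$ methods behind \cite{GIOW}) comes close to $d/2$ for interior statements, even without uniformity.

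Your Steps (2) and (3) are a reasonable sketch of the bootstrapping that is standard in this area --- it is the same mechanism as the chain/tree constructions in \cite{BIT, IT, OT22} and as the pin wiggling iterations in the present paper, where a pinned interval that survives small perturbation of the pin is concatenated link by link --- so as a conditional roadmap the proposal is coherent, though even there the distinctness cleanup is usually handled by separating supports of the measures rather than by a ``measure-zero set of bad length tuples'' argument, which would need justification for a general compact $E$. But a conditional reduction of an open conjecture to a strictly stronger open statement does not establish the conjecture, and it also does not reproduce what the paper actually does, which is to verify the conjecture on a special class of even-dimensional Cantor product examples via the Jung--Lai containment lemma rather than to attack the general compact case.
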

Our Corollary \ref{cor:2-chain-hausdorff-dimension-zero} and its higher dimensional analogue, Corollary \ref{cor:n-chain-hausdorff-dimension-zero} both stated below, supply a class of examples where this conjecture holds in even dimensions. In particular, we show that for each $s\geq d$, there exists a product Cantor set $E\subset \R^{2d}$ satisfying $\dim_{\rm H}(E)= s$ and $\Delta_x^n(E)^\circ{}\neq \varnothing$ for some $x\in E$.  These results also hold for pinned trees, but we state this conjecture and the corollaries for pinned chains for simplicity. 

\smallskip

\subsection{Notations}
$|\cdot|$ denotes the Lebesgue measure, $(\cdot)^\circ{}$ denotes the interior, and $\text{conv}(\cdot)$ denotes the convex hull. A Cantor set is a subset of $\R^d$ that is compact, totally disconnected, and perfect. A sub-Cantor set of a Cantor set $C$ is a subset that is itself a Cantor set.

\smallskip
\subsection{Organization of the paper}
Our main results are presented in increasing level of complexity in 
\S\ref{sec:main-results}. 
The proof of our first pin wiggling lemma, Lemma \ref{lem:pin-wiggling}, is given in \S\ref{sec:lem}, and 
the proof Theorem \ref{thm:2-chain} on $2$-chains on subset of the plane is given in \S\ref{sec:2-chain}. 
The proof of Theorem \ref{thm:n-chain} on $n$-chains on subsets of $\R^2$ is given in \S\ref{sec:n-chain}. 
The proof of Theorem \ref{thm:tree} on $n$-trees on subsets of $\R^2$ is presented in \S\ref{sec:tree}. 
Our higher dimensional pin wiggling lemma, Lemma \ref{lem:pin-wiggling-Rd}, is presented in 
\S\ref{sec:lem-Rd}, and 
the proof of Theorem \ref{thm:tree-Rd} on finite trees on subsets of $\R^{2d}$ is presented in 
\S\ref{sec:tree-Rd}. 
We conclude our paper by discussing some open questions in
\S\ref{sec:open-questions}.

\smallskip
\section{Main results}\label{sec:main-results}
To build the ideas in this paper, we present our results in increasing levels of generality, starting with our result on the real line and progressing to results in higher dimensions. We first introduce Lemma \ref{lem:pin-wiggling} as our main tool in this paper; we also introduce its higher-dimensional analogue Lemma \ref{lem:pin-wiggling-Rd}. These can be seen as a variant of the pin wiggling lemma for distances in \cite[Lemma 3.5]{McDonald-Taylor-finite-point-configurations}.

\begin{lemma}[A variant pin wiggling lemma]\label{lem:pin-wiggling}
Let $K\subset \R$ be a Cantor set, $v=(v_1,v_2)\in \R^2$, and $I$ be a nonempty open interval such that $ K\cap I \neq \varnothing$. Then, there exist a Cantor set $\widetilde{K}\subset I$ with $|\widetilde{K}|>|K|$ and an open neighborhood $U_v\subset \R^2$ of $v$ such that
\begin{align*}
    \left(\bigcap_{z\in U_v} \Delta_z(K\times \widetilde{K})\right)^\circ \neq \varnothing.
\end{align*}
\end{lemma}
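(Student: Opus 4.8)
The plan is to observe that the pinned distance set $\Delta_z(K\times\widetilde K)$ contains an image set of the type handled by Jung and Lai, namely $H(z,K',\widetilde K)$ for the Euclidean norm map
\[
H(z,x,y)=\|(x,y)-z\|=\sqrt{(x-z_1)^2+(y-z_2)^2},\qquad z=(z_1,z_2)\in\R^2,\ x,y\in\R,
\]
and a suitably chosen sub-Cantor set $K'\subseteq K$, and then to invoke \cite[Theorem~1.1]{jung-lai-interior-of-certain} to produce $\widetilde K$. The one hypothesis of that theorem requiring work is that $H$ be $C^1$ with non-vanishing (partial) Jacobian on the relevant domain, which forces us to keep everything away from the singularity $\{(x,y)=z\}$ of the norm near the pin $v$.

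Accordingly, I would first localize. Since $K$ is perfect, $K\cap I$ is infinite, so choose $q\in K\cap I$ with $q\neq v_1$ together with a compact interval $J_K\subset I$ having $q$ in its interior and $\operatorname{dist}(v_1,J_K)>0$; let $K'\subseteq K$ be a clopen-in-$K$ neighborhood of $q$ with $K'\subset J_K$, so that $K'$ is a sub-Cantor set of $K$ bounded away from $v_1$. Next choose a compact interval $\widetilde J\subset I$ with $\operatorname{dist}(v_2,\widetilde J)>0$ (any subinterval will do if $v_2\notin\overline I$; otherwise take a subinterval lying in one component of $I\setminus\{v_2\}$), and fix $\delta>0$ smaller than both $\operatorname{dist}(v_1,J_K)$ and $\operatorname{dist}(v_2,\widetilde J)$. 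On the compact box $\overline{B(v,\delta)}\times J_K\times\widetilde J$ one then has $|x-z_1|\ge \operatorname{dist}(v_1,J_K)-\delta>0$ and $|y-z_2|\ge\operatorname{dist}(v_2,\widetilde J)-\delta>0$, so $H$ is $C^1$ there and $\partial_xH=(x-z_1)/H$, $\partial_yH=(y-z_2)/H$ are non-vanishing; this is the non-degeneracy required by \cite[Theorem~1.1]{jung-lai-interior-of-certain} (applied with $N=2$, $d=1$, the given Cantor set $K'$, and the interior of $\widetilde J$ as the region in which the new Cantor set is built).

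Applying that theorem I would obtain a sub-Cantor set $\widetilde K\subset\widetilde J\subset I$, an open ball $U_v\ni v$ (shrinking $\delta$ if necessary), and, from the perturbation-robust part of the statement, a single nonempty open interval $\mathcal J$ with
\[
\mathcal J\ \subseteq\ H(z,K',\widetilde K)=\{\,|z-w|:w\in K'\times\widetilde K\,\}\qquad\text{for every }z\in U_v.
\]
Since $K'\times\widetilde K\subseteq K\times\widetilde K$, the right-hand set is contained in $\Delta_z(K\times\widetilde K)$, so $\mathcal J\subseteq\bigcap_{z\in U_v}\Delta_z(K\times\widetilde K)$, which is the desired conclusion. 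The remaining requirement $|\widetilde K|>|K|$ is cheap: since $\widetilde K$ is nowhere dense in $I$, I can enlarge it by a positive-measure Cantor set supported in a subinterval of $I\setminus\{v_2\}$ disjoint from $\widetilde K$; this keeps $\widetilde K$ a Cantor set inside $I$ that still avoids a neighborhood of $v_2$, preserves everything above, and makes $|\widetilde K|$ as large as the room in $I$ allows --- in particular $>|K|$ (and trivially so when $|K|=0$, the case of chief interest).

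I expect the only genuinely delicate points to be (i) the singularity of the Euclidean norm, handled by the localization that makes the non-vanishing-Jacobian hypothesis of \cite[Theorem~1.1]{jung-lai-interior-of-certain} hold on a whole neighborhood of $v$ (not merely at $v$), and (ii) producing one interval $\mathcal J$ valid for all $z\in U_v$ rather than a $z$-dependent one; the latter is exactly the robustness-under-perturbation form of the Jung--Lai theorem, and it is what upgrades a bare ``nonempty interior'' conclusion to the ``nonempty interior of the intersection'' demanded by a pin-wiggling statement.
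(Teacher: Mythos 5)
Your argument is essentially the paper's own proof: localize away from the singularity of the norm so that the hypotheses of Jung--Lai's Theorem~1.1 hold for $H(z,x,y)=\|(x,y)-z\|$ on a neighborhood of $v$, apply it to a sub-Cantor set of $K\cap I$ to get $\widetilde K\subset I$ and $U_v$ with a common interval inside every $H(z,K',\widetilde K)$, and conclude by monotonicity $K'\times\widetilde K\subset K\times\widetilde K$. The only cosmetic differences are that the paper separates only the second coordinate (the Jacobian in Theorem~1.1 is taken only in the $Q_2$-variable, so $|y-z_2|>0$ suffices) and reads the measure inequality directly off the theorem's conclusion $|K_2|>|K_1|$ instead of enlarging $\widetilde K$ afterwards.
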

Lemma \ref{lem:pin-wiggling} is proved in \S \ref{sec:lem}. We now turn to applications. 

\subsection{The simplest case: 2-chains}
Our main result in the simplest setting is the following Theorem \ref{thm:2-chain}, which we prove in \S \ref{sec:2-chain}. As an application, we construct a Cantor set in $\R^2$ of Hausdorff dimension $1$ whose $2$-chain distance set has nonempty interior. Note that the proof can be easily adapted to obtain such Cantor sets of any Hausdorff dimension $s\geq 1$.

\begin{theorem}[2-chains on subsets of  $\R^2$]\label{thm:2-chain}\index{}
For every Cantor set $K\subset \R$, there exists a Cantor set $\widetilde{K}\subset \R$ with $|\widetilde{K}|>|K|$ such that $\left(\Delta^2_{y} (K\times \widetilde{K})\right)^\circ \neq \varnothing$ for some $y\in K\times \widetilde{K}$.
\end{theorem}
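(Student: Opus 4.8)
The plan is to build the chain in two stages, applying Lemma \ref{lem:pin-wiggling} twice and exploiting the freedom to choose skeleton points on the diagonal of $K \times K$. First I would fix a point $a \in K$ that is not isolated and an open interval $I_0$ with $a \in I_0$ and $K \cap I_0 \neq \varnothing$ of small diameter; set the pin $y = (a,a) \in K \times K \subset K \times \widetilde K$ (once $\widetilde K$ is constructed we will have $a \in \widetilde K$ by arranging $K \subset \widetilde K$, or more simply taking the ambient set to be $K \times (K \cup \widetilde K)$ as in the introduction). The two "free" vertices of the 2-chain are $x^1, x^2 \in K \times \widetilde K$, and we want the map $(x^1,x^2) \mapsto (|y - x^1|, |x^1 - x^2|)$ to have an open set in its image.

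The key move is to decouple the two coordinates. For the first coordinate, apply Lemma \ref{lem:pin-wiggling} with the given Cantor set $K$, with $v$ chosen appropriately, and with interval $I$ a suitable subinterval: this produces a Cantor set $\widetilde K \subset I$ with $|\widetilde K| > |K|$ and an open neighborhood $U_v$ of $v$ such that $\bigl(\bigcap_{z \in U_v} \Delta_z(K \times \widetilde K)\bigr)^\circ \neq \varnothing$; fix an open interval $J_1$ inside this set. Now let $x^1$ range over the set of points of $K \times \widetilde K$ whose distance from the pin $y$ lies in $J_1$ — this is a nonempty open-in-$K\times\widetilde K$ condition and, crucially, it only constrains the first coordinate of the chain. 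For the second coordinate, we need $|x^1 - x^2| \in J_2$ for some fixed open interval $J_2$; the uniformity over $z \in U_v$ in Lemma \ref{lem:pin-wiggling} is exactly what lets us move the "pin" $x^1$ around within a neighborhood (since $x^1$ itself is not a fixed point but varies as we sweep out the first coordinate) and still keep $x^2 \in K \times \widetilde K$ with $|x^1 - x^2| \in J_2$. Concretely: choosing the neighborhood $U_v$ in the lemma application large enough to contain all the candidate positions of $x^1$, we get that $\Delta_{x^1}(K \times \widetilde K) \supset J_2$ for every admissible $x^1$ simultaneously. Combining, the image of $(x^1,x^2)\mapsto(|y-x^1|,|x^1-x^2|)$ contains $J_1 \times J_2$, which is open; after intersecting $J_1, J_2$ with the complement of finitely many points to enforce the distinctness conditions $x^i \neq x^j$ (removing at most the at-most-countably-many distances that force a collision, or simply shrinking to avoid the single bad distance $0$ and the distance $|y - x^1|$ coincidences), we conclude $(\Delta^2_y(K \times \widetilde K))^\circ \neq \varnothing$.

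The main obstacle is the bootstrapping/compatibility issue already flagged in the introduction: $\widetilde K$ is not given in advance, so we cannot first pick skeleton points and then verify distances. The resolution is that Lemma \ref{lem:pin-wiggling} is engineered to produce $\widetilde K$ together with a whole neighborhood $U_v$ of good pins, so a single application of the lemma simultaneously fixes $\widetilde K$ and guarantees the distance property for every pin in $U_v$; then the only thing to check is that the candidate first-vertex positions $x^1$ (which are determined by $y$, $\widetilde K$, and the interval $J_1$, hence lie in a controlled compact set) can be made to lie inside $U_v$ by choosing $v$ and the interval $I$ in the lemma appropriately at the outset. A secondary (routine) obstacle is enforcing the non-degeneracy constraints $x^i \neq x^j$ and $x^i \neq y$; since these exclude only a measure-zero set of parameter values, shrinking the open intervals $J_1, J_2$ slightly handles it. Finally, the parenthetical claim that the construction yields such Cantor sets of any Hausdorff dimension $s \geq 1$ follows because Lemma \ref{lem:pin-wiggling} only asserts $|\widetilde K| > |K| \geq 0$ and imposes no upper constraint, so one is free to fatten $\widetilde K$ (e.g. replace it by a larger sub-Cantor set of $I$ containing it, or take a product with an interval-like Cantor set) to reach dimension exactly $s$ without destroying the interior of the chain set.
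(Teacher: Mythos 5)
There is a genuine gap at the heart of your argument for the second link. Lemma \ref{lem:pin-wiggling} hands you \emph{some} neighborhood $U_v$ of the point $v$ that you specify in advance; you have no control over its size. Your ``concrete'' step — ``choosing the neighborhood $U_v$ in the lemma application large enough to contain all the candidate positions of $x^1$'' — is therefore not available. Worse, the candidate positions of $x^1$ are the points of $K\times\widetilde K$ at distance lying in $J_1$ from the pin $y$, and $J_1$ is an interval of positive distances produced only \emph{after} the lemma has been applied; these candidates sit a definite positive distance away from $y$, while $U_v$ is a neighborhood of the single point $v$ (which must be at or near $y$ if you want $J_1\subset\Delta_y(K\times\widetilde K)$ in the first place). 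So no choice of $v$ and $I$ ``at the outset'' can make one neighborhood $U_v$ contain both $y$ and the admissible $x^1$'s, and a single application of the lemma cannot serve both links of the chain.

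The fix is to apply the lemma twice, at two distinct skeleton points on the diagonal, working backwards along the chain, and to reverse the direction of the containment you are trying to force: instead of enlarging the good-pin neighborhood to swallow the candidate intermediate vertices, you shrink the set of candidate intermediate vertices into the good-pin neighborhood. Concretely, first apply the lemma with pin parameter $v=x^1$ and an interval $I$ chosen (using that $x^2$ lies on the diagonal) so that the resulting product $K_2\times\widetilde K_2\subset I\times I$ sits inside a prescribed neighborhood $\mathcal U_2$ of $x^2$; this yields a neighborhood $\mathcal U_1$ of $x^1$ such that every pin in $\mathcal U_1$ sees a common interval of distances to $K_2\times\widetilde K_2$. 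Then apply the lemma a second time with $v=x^0$ and an interval $I_1$ chosen, again via the diagonal, so that the new product $K_1\times\widetilde K_1$ lies \emph{inside} $\mathcal U_1$. Now any intermediate vertex $y^1\in K_1\times\widetilde K_1$ is automatically an admissible pin for the second link, the two links decouple, distinctness is free because the three neighborhoods are disjoint, and one takes $\widetilde K=K\cup\widetilde K_1\cup\widetilde K_2$ and $y\in\mathcal U_0\cap(K\times\widetilde K)$. Your proposal correctly identifies that the uniformity over pins is the engine, but without this two-stage nesting (target set of the later application contained in the pin-neighborhood of the earlier one) the argument does not close. As a minor further point, your closing remark about reaching any dimension $s\ge 1$ by ``fattening'' $\widetilde K$ does not work for $s>1$, since $\dim_{\rm H}(K\times\widetilde K)\le\dim_{\rm H}(K)+1$; one instead starts from a $K$ of Hausdorff dimension $s-1$.
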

\begin{corollary}\label{cor:2-chain-hausdorff-dimension-zero}\index{}
There exists a Cantor set $C\subset \R^2$ such that $\dim_{\rm H}(C)=1$ and the pinned 2-chain distance set $\Delta^2_{y} (C)$ has nonempty interior for some $y\in C$.
\end{corollary}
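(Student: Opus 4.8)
The plan is to start from a concrete thin Cantor set $K \subset \R$ with $\dim_{\rm H}(K) = 0$ — for instance, the Cantor set $K = \{0\} \cup \{1/n : n \in \N\}$ fattened to a perfect totally disconnected set, or more cleanly a ``super-exponential'' Cantor set obtained by removing middle portions whose lengths decay fast enough that the gauge function $t^s$ has infinite Hausdorff measure for no $s > 0$. Concretely, one fixes a decreasing sequence of ratios $r_k \to 0$ fast enough (e.g. $r_k = 2^{-2^k}$) and lets $K$ be the associated self-similar-type Cantor set; a standard computation of the Hausdorff dimension of such a set gives $\dim_{\rm H}(K) = 0$. Then I would invoke Theorem \ref{thm:2-chain}: applied to this $K$, it produces a Cantor set $\widetilde{K} \subset \R$ with $|\widetilde{K}| > |K|$ (in particular $\widetilde K$ is nontrivial) and a pin $y \in K \times \widetilde{K}$ such that $\left(\Delta^2_y(K \times \widetilde K)\right)^\circ \neq \varnothing$. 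Setting $C := K \times \widetilde{K} \subset \R^2$ then immediately gives a Cantor set whose pinned $2$-chain distance set has nonempty interior.

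The remaining point is the dimension bookkeeping: I need $\dim_{\rm H}(C) = 1$, not merely ``some value.'' Here two facts must be combined. First, by the product inequality for Hausdorff dimension, $\dim_{\rm H}(C) = \dim_{\rm H}(K \times \widetilde K) \geq \dim_{\rm H}(K) + \dim_{\rm H}(\widetilde K) = \dim_{\rm H}(\widetilde K)$, and also $\dim_{\rm H}(C) \leq \dim_{\rm H}(K) + \dim_{\rm B}(\widetilde K) = \dim_{\rm B}(\widetilde K)$ where $\dim_{\rm B}$ is the upper box dimension; since $\dim_{\rm H}(K) = 0$ kills the interaction term, $\dim_{\rm H}(C)$ is squeezed between $\dim_{\rm H}(\widetilde K)$ and $\dim_{\rm B}(\widetilde K)$. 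So it suffices to arrange $\dim_{\rm H}(\widetilde K) = \dim_{\rm B}(\widetilde K) = 1$. The construction of $\widetilde K$ in the proof of Theorem \ref{thm:2-chain} (via Lemma \ref{lem:pin-wiggling}) only ever \emph{enlarges} a fixed target Cantor set and only requires $|\widetilde K| > |K|$; in particular one is free to build $\widetilde K$ as a sub-Cantor set of a fixed Cantor set of positive Lebesgue measure, hence of Hausdorff and box dimension $1$, or alternatively to note that any Cantor set with nonempty-interior distance-type properties built by these fattening procedures has box dimension $1$. Thus $\dim_{\rm H}(C) = 1$.

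The step I expect to require the most care is verifying that Theorem \ref{thm:2-chain}'s construction really does give a $\widetilde K$ of full dimension $1$ (or, equivalently, that one may take $\widetilde K$ inside a prescribed positive-measure Cantor set while still meeting the hypothesis $|\widetilde K| > |K|$ that drives the iteration in Lemma \ref{lem:pin-wiggling}). Since $\dim_{\rm H}(K) = 0$ forces $|K| = 0$, the requirement $|\widetilde K| > |K| = 0$ is automatically satisfiable by \emph{any} Cantor set of positive measure, so there is in fact no tension: one simply observes that the $\widetilde K$ produced can be taken to have positive Lebesgue measure — indeed the proof already yields $|\widetilde K| > 0$ — whence $\dim_{\rm H}(\widetilde K) = 1$ and, combined with the product dimension bounds above and $\dim_{\rm H}(K) = 0$, we conclude $\dim_{\rm H}(C) = 1$. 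The remark following Theorem \ref{thm:2-chain} that ``the proof can be easily adapted to obtain such Cantor sets of any Hausdorff dimension $s \geq 1$'' confirms that this dimension-control is built into the argument and needs only to be recorded here.
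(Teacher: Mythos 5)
Your proposal is correct and follows essentially the same route as the paper: take any Cantor set $K$ with $\dim_{\rm H}(K)=0$, apply Theorem \ref{thm:2-chain} to get $\widetilde{K}$, note that $|\widetilde{K}|>|K|$ forces $|\widetilde{K}|>0$ and hence $\dim_{\rm H}(\widetilde{K})=\dim_{\rm B}(\widetilde{K})=1$, and conclude $\dim_{\rm H}(K\times\widetilde{K})=1$ from the product dimension inequalities. Your digression about rebuilding $\widetilde{K}$ inside a prescribed positive-measure Cantor set is unnecessary, since, as you ultimately observe, the positivity of $|\widetilde{K}|$ already comes for free from the theorem's conclusion.
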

\begin{proof}[Proof of Corollary \ref{cor:2-chain-hausdorff-dimension-zero}]
Let $K\subset \R$ be a Cantor set with zero Hausdorff dimension. Then, by Theorem \ref{thm:2-chain}, there exists a Cantor set $\widetilde{K}$ such that the pinned 2-chain distance set $\Delta^2_y (K\times \widetilde{K})$ has nonempty interior for some $y\in K\times \widetilde{K}$. We claim that $C=K\times \widetilde{K}$ is the desired Cantor set. Note that since $\widetilde{K}$ always has a positive Lebesgue measure, $\mbox{dim}_H(\widetilde{K}) = \mbox{dim}_B(\widetilde{K}) =1$. By the dimension estimate formula (\cite[Equation 7.2 and 7.3]{falconer-fractal-geometry})
\begin{align*}
    \dim_{\rm H}(K)+\dim_{\rm H}(\widetilde{K}) \leq \dim_{\rm H}(K\times \widetilde{K}) \leq \dim_{\rm H}(K)+\dim_B (\widetilde{K}),
\end{align*}
we have $\dim_{\rm H}(K\times \widetilde{K})=1$. This completes the proof.
\end{proof}

\subsection{$n$-chains and finite trees}
The case for $n$-chains is a direct generalization of the $2$-chain case. as given by the following Theorem \ref{thm:n-chain}, which we prove in \S \ref{sec:n-chain}. By the same argument in Corollary \ref{cor:2-chain-hausdorff-dimension-zero}, we construct a Cantor set in $\R^2$ of Hausdorff dimension 1 whose $n$-chain distance set has nonempty interior as an application.
\begin{theorem}[$n$-chains over subsets of $\R^2$]\label{thm:n-chain}\index{}
For every Cantor set $K \subset \mathbb{R}$ and every $n\in \N$, there exists a Cantor set $\widetilde{K} \subset \R$ with $|\widetilde{K}|>|K|$ such that $\left( \Delta^n_{y} (K \times \widetilde{K} ) \right)^\circ \neq \varnothing$ for some $y\in K\times \widetilde{K}$.
\end{theorem}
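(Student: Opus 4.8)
The plan is to prove the statement by induction on $n$, using Lemma \ref{lem:pin-wiggling} at each step to extend a chain by one more vertex while enlarging the ambient Cantor set. The base case $n=1$ is immediate (or $n=2$ is Theorem \ref{thm:2-chain}), so suppose the conclusion holds for $n-1$: there is a Cantor set $\widetilde{K}_{n-1}\subset \R$ with $|\widetilde{K}_{n-1}|>|K|$ and a pin $y_0 = (a_0, b_0) \in K\times \widetilde{K}_{n-1}$ such that $\left(\Delta^{n-1}_{y_0}(K\times \widetilde{K}_{n-1})\right)^\circ \neq \varnothing$. The key point, following the ``skeleton on the diagonal'' idea described in the introduction, is to choose all the skeleton points $x^1,\dots,x^{n-1}$ realizing an interior tuple of $\Delta^{n-1}_{y_0}$ from within $K \times K$ — more precisely, since $\widetilde{K}$ is built to contain $K$-shaped copies, one arranges the earlier stages so that the realizing configuration lives in $(K\times K) \subset (K \times \widetilde{K}_{n-1})$ and the final vertex $x^{n-1}$ has both coordinates lying in (a copy of) $K$. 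This is what makes the inductive step legitimate: the ``new pin'' that we feed into Lemma \ref{lem:pin-wiggling} is $v = x^{n-1}$, and we need $x^{n-1}$ to be stable under the construction of the next Cantor set.

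For the inductive step, apply Lemma \ref{lem:pin-wiggling} with the Cantor set $K$, the vector $v = x^{n-1} \in \R^2$ (viewed as a pin), and a small open interval $I$ with $K\cap I \neq \varnothing$ chosen disjoint from the relevant portion of $\widetilde{K}_{n-1}$ so that the union below is again totally disconnected and perfect. The lemma produces a Cantor set $\widetilde{K}' \subset I$ with $|\widetilde{K}'| > |K|$ and an open neighborhood $U_v \subset \R^2$ of $v$ with $\left(\bigcap_{z\in U_v}\Delta_z(K\times \widetilde{K}')\right)^\circ \neq \varnothing$. Set $\widetilde{K}_n := \widetilde{K}_{n-1} \cup \widetilde{K}'$, which is again a Cantor set, with $|\widetilde{K}_n| \geq |\widetilde{K}'| > |K|$. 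Now for any $t_n$ in the interior guaranteed by the lemma, and for $z$ ranging over $U_v$, there is $x^n \in K\times \widetilde{K}' \subset K\times \widetilde{K}_n$ with $|z - x^n| = t_n$. Since $\Delta^{n-1}_{y_0}(K\times \widetilde{K}_{n-1})$ has interior and is realized by a configuration whose last vertex $x^{n-1}$ can be perturbed within $U_v$ without leaving $K\times \widetilde{K}_{n-1}$ (this is the ``wiggling'' — it follows because $K\times \widetilde{K}_{n-1}$ contains a $K$-copy through which $x^{n-1}$ varies continuously over an open set of $\Delta$-values), we may compose: the map $(t_1,\dots,t_{n-1}) \mapsto$ (last vertex $x^{n-1}(t_1,\dots,t_{n-1})$) is, on the interior, an open map into $U_v$, so $(t_1,\dots,t_{n-1},t_n)$ with $t_n$ in the lemma's interval all lie in $\Delta^n_{y_0}(K\times\widetilde{K}_n)$. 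Hence $\left(\Delta^n_{y_0}(K\times\widetilde{K}_n)\right)^\circ \neq \varnothing$, completing the induction after relabeling $\widetilde{K} := \widetilde{K}_n$.

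The main obstacle — and the point requiring genuine care rather than routine bookkeeping — is ensuring that the last vertex $x^{n-1}$ of the inductively-built $(n-1)$-chain sweeps out a full open subset of $\R^2$ as the chain-distance tuple ranges over an open set, and that this open set can be taken to contain the pin $v$ fed into Lemma \ref{lem:pin-wiggling}. In other words, one must propagate through the induction not merely ``$\Delta^{n-1}_{y_0}$ has interior'' but the stronger statement that the terminal-vertex map is open onto a neighborhood of a point of $K\times K$ (a ``pinned-with-free-endpoint'' version of the wiggling lemma). This is exactly the content captured by the $\bigcap_{z\in U_v}$ form of Lemma \ref{lem:pin-wiggling}: the intersection over a neighborhood of pins is precisely what lets the endpoint of stage $n-1$ be used as the pin of stage $n$. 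One also needs the routine but necessary check that the nested unions $\widetilde{K}_1\subset\widetilde{K}_2\subset\cdots$ can be arranged (by placing each new piece in a disjoint interval $I$) so that the final union is still compact, perfect, and totally disconnected, and that disjointness of the $I$'s does not destroy the interior of the chain set — which it does not, since distances between a pin near the diagonal of $K\times K$ and points of a far-away $\widetilde{K}'$-piece still form an interval by Lemma \ref{lem:pin-wiggling} applied with that choice of $I$.
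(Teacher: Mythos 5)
Your proposal has a genuine gap at exactly the point you flag as ``the main obstacle,'' and the patch you offer there does not work. In your inductive step the neighborhood $U_v$ is produced by Lemma \ref{lem:pin-wiggling} \emph{after} the $(n-1)$-chain structure (the set $\widetilde{K}_{n-1}$ and the configurations realizing interior tuples) has already been fixed, so there is no reason the terminal vertex of a realizing chain lies in $U_v$, which may be an arbitrarily small neighborhood of $v$. Your attempted repair --- that the map $(t_1,\dots,t_{n-1})\mapsto x^{n-1}$ is ``an open map into $U_v$'' --- is false: the terminal vertex ranges over a subset of $K\times\widetilde{K}_{n-1}$, which is totally disconnected (hence has empty interior in $\R^2$), so its image cannot cover an open set; and your suggestion that the realizing configuration can be taken inside $K\times K$ conflates the skeleton points (which do lie on the diagonal of $K\times K$) with the realizing vertices, which necessarily lie in $K_i\times\widetilde{K}_i$ for the newly constructed sets $\widetilde{K}_i\not\subset K$. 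What is actually needed is not openness of the endpoint map but the weaker-sounding yet crucial statement that \emph{for every tuple in a fixed open box there exists a realizing chain whose terminal vertex lies in the prescribed (later-chosen, possibly tiny) neighborhood of the next skeleton point} --- and this is not implied by the bare hypothesis ``$\Delta^{n-1}_{y_0}$ has nonempty interior.''

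The paper resolves this by abandoning forward induction on $n$ and instead constructing all $n$ links simultaneously in \emph{reverse} order along a skeleton $x^0,\dots,x^{n-1}$ on the diagonal of $K\times K$: Lemma \ref{lem:pin-wiggling} is first applied at $x^{n-1}$ with pin $x^{n-2}$, producing $\widetilde{K}_{n-1}$ and a neighborhood $\mathcal{U}_{n-2}$ of $x^{n-2}$; then the next application places $K_{n-2}\times\widetilde{K}_{n-2}$ \emph{inside} the already-available $\mathcal{U}_{n-2}$ (via a square $I_{n-2}\times I_{n-2}\subset\mathcal{U}_{n-2}$ meeting the diagonal), and so on down to $\mathcal{U}_0$. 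This ordering guarantees the compatibility $K_i\times\widetilde{K}_i\subset\mathcal{U}_i$, so that when the chain is assembled forwards, each realized vertex $y^i$ automatically lies in the pin-neighborhood over which the next link's distance interval is uniform; the pairwise disjointness of the $\mathcal{U}_j$ also gives the distinctness of the $y^i$, which your write-up does not address. To salvage your induction you would have to strengthen the inductive statement so that the pin-neighborhood for the \emph{next} link is quantified before the current stage's Cantor sets are chosen, which is precisely the reverse-order construction in the paper's claim; as written, your argument does not establish the theorem.
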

\begin{corollary}[]\label{cor:n-chain-hausdorff-dimension-zero}\index{}
For every $n\in\N$, there exists a Cantor set $C\subset \R^2$ such that $\dim_{\rm H}(C)=1$ and the pinned $n$-chain distance set $\Delta^n_{y} (C)$ has nonempty interior for some $y\in C$.
\end{corollary}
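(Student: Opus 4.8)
The plan is to follow the proof of Corollary \ref{cor:2-chain-hausdorff-dimension-zero} essentially verbatim, substituting Theorem \ref{thm:n-chain} for Theorem \ref{thm:2-chain}. Fix $n\in\N$. First I would choose a Cantor set $K\subset\R$ with $\dim_{\rm H}(K)=0$; such sets are plentiful (for instance, a generalized Cantor set whose gap ratios tend to $1$ sufficiently fast). Applying Theorem \ref{thm:n-chain} to this $K$ yields a Cantor set $\widetilde K\subset\R$ with $|\widetilde K|>|K|\geq 0$ --- hence $|\widetilde K|>0$ --- together with a pin $y\in K\times\widetilde K$ for which $\bigl(\Delta^n_y(K\times\widetilde K)\bigr)^\circ\neq\varnothing$. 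The claim is then that $C:=K\times\widetilde K$ is the desired Cantor set: it is compact, totally disconnected, and perfect as a product of such sets, and its pinned $n$-chain distance set has nonempty interior by construction.

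It then remains to verify that $\dim_{\rm H}(C)=1$. Since $\widetilde K\subset\R$ has positive Lebesgue measure, $\dim_{\rm H}(\widetilde K)=1$, and because the (upper) box dimension of a subset of $\R$ never exceeds $1$ while always dominating the Hausdorff dimension, also $\dim_B(\widetilde K)=1$. Combining this with $\dim_{\rm H}(K)=0$ in the standard product dimension inequalities (\cite[Equations 7.2 and 7.3]{falconer-fractal-geometry})
$$\dim_{\rm H}(K)+\dim_{\rm H}(\widetilde K)\;\leq\;\dim_{\rm H}(K\times\widetilde K)\;\leq\;\dim_{\rm H}(K)+\dim_B(\widetilde K)$$
forces $\dim_{\rm H}(C)=1$, completing the argument.

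There is no genuine obstacle at the level of the corollary: once Theorem \ref{thm:n-chain} is in hand (all of whose substance lives in \S\ref{sec:n-chain}), the corollary is pure bookkeeping about dimensions of product sets. I would, however, remark that the same recipe starting from a Cantor set $K\subset\R$ with $\dim_{\rm H}(K)=s-1$ for any $s\in[1,2)$ produces a Cantor set $C\subset\R^2$ with $\dim_{\rm H}(C)=s$ and $\bigl(\Delta^n_y(C)\bigr)^\circ\neq\varnothing$, which is the sharpened form alluded to just before the statement of Theorem \ref{thm:n-chain}; this costs nothing beyond choosing $K$ appropriately, since the argument above only used $\dim_{\rm H}(K)=0$ to read off the value of the product dimension.
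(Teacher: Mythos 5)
Your proposal is correct and matches the paper's intended argument: the paper proves this corollary exactly as you do, by citing Theorem \ref{thm:n-chain} in place of Theorem \ref{thm:2-chain} and then repeating the dimension bookkeeping from the proof of Corollary \ref{cor:2-chain-hausdorff-dimension-zero} (positive measure of $\widetilde{K}$ gives $\dim_{\rm H}(\widetilde{K})=\dim_B(\widetilde{K})=1$, and the product inequalities force $\dim_{\rm H}(K\times\widetilde{K})=1$). Your closing remark about achieving any $s\in[1,2)$ is likewise consistent with the paper's own comment that the construction adapts to any Hausdorff dimension $s\geq 1$.
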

To state our result in the most general case of arbitrary finite trees, we introduce the necessary definitions. A \textit{tree} is a connected acyclic graph, a \textit{finite tree} is a tree whose vertex set is finite, and a \textit{rooted tree} is a tree with a designated root. The \textit{depth of a vertex} of a rooted tree is the length of the path from the root to the vertex, and the \textit{depth of a rooted tree} is the maximum depths among all vertices. Given a finite tree $T$ with $n$ vertices (and hence $n-1$ edges), the \textit{tree distance set} of $X\subset \R^d$ for a tree $T$ is
\begin{align*}
\Delta^T (X) &= \{ (|y^\tau-y^\sigma|)_{\tau\sim \sigma}: \tau, \sigma \in T,\ \{y^\tau\}_{\tau \in T} \text{ 
is a set of distinct points in } X\}\subset \R^{n-1},
\end{align*}
where $\vec{l}=(|y^\tau-y^\sigma|)_{\tau\sim \sigma}\in \R^{n-1}$ is an $(n-1)$-dimensional vector whose coordinates are distances between distinct points $y^\tau$ and $y^\sigma$, where $\tau \sim \sigma$. Generally, there is more than one way to label the vertices of $T$. Since relabeling vertices of $T$ correspond to permuting the coordinates of $\vec{l} \in \Delta^T (X)$ in a Euclidean space, the interior of $\Delta^T (X)$ and its Lebesgue measure are independent of the labeling of $T$. Thus, we canonically label vertices of depth $n$ using finite words of length $n+1$ with entries from $\Z_{\geq 0}$, where the root is always labeled by $0$. Specifically, given a labeling of vertices upto depth $d$, the vertices of depth $d+1$ is labeled by adjoining $i$ to the label of its unique predecessor if and only if it is $i$-th from the left. Then, the \textit{Kleene-Brouwer order} (for definition, see \cite{Moschovakis}) on the vertex set induces an order on the coordinates of the tree distances $\vec{l}$.

For example, a binary tree $T$ of depth 2 has the order (see Figure \ref{fig:kleene-brouwer})
\begin{align*}
0 > 00 > 01 > 000 > 001 > 010 > 011
\end{align*}
and determines the distance vector $\vec{l}=(l_1,\cdots, l_6)$ of distinct points $\{x^\tau \in X\}_{\tau \in T}$ to be
\begin{align*}
\vec{l}= \left( |x^0-x^{00}|, |x^0-x^{01}|, |x^{00}-x^{000}|, |x^{00}-x^{001}|, |x^{01}-x^{010}|, |x^{01}-x^{011}| \right).
\end{align*}
To this end, we define the \textit{pinned tree distance set} of $X\subset \R^d$ for a tree $T$ and pin $y\in X$ by
\begin{align*}
\Delta^T_y (X) &= \{ (|y^\tau-y^\sigma|)_{\tau\sim \sigma}: \tau, \sigma \in T,\ \{y^\tau\}_{\tau \in T} \text{ 
is a set of distinct points in } X, y^0=y\}.
\end{align*}
\begin{figure}
    \centering
    \includegraphics[width=0.4\linewidth]{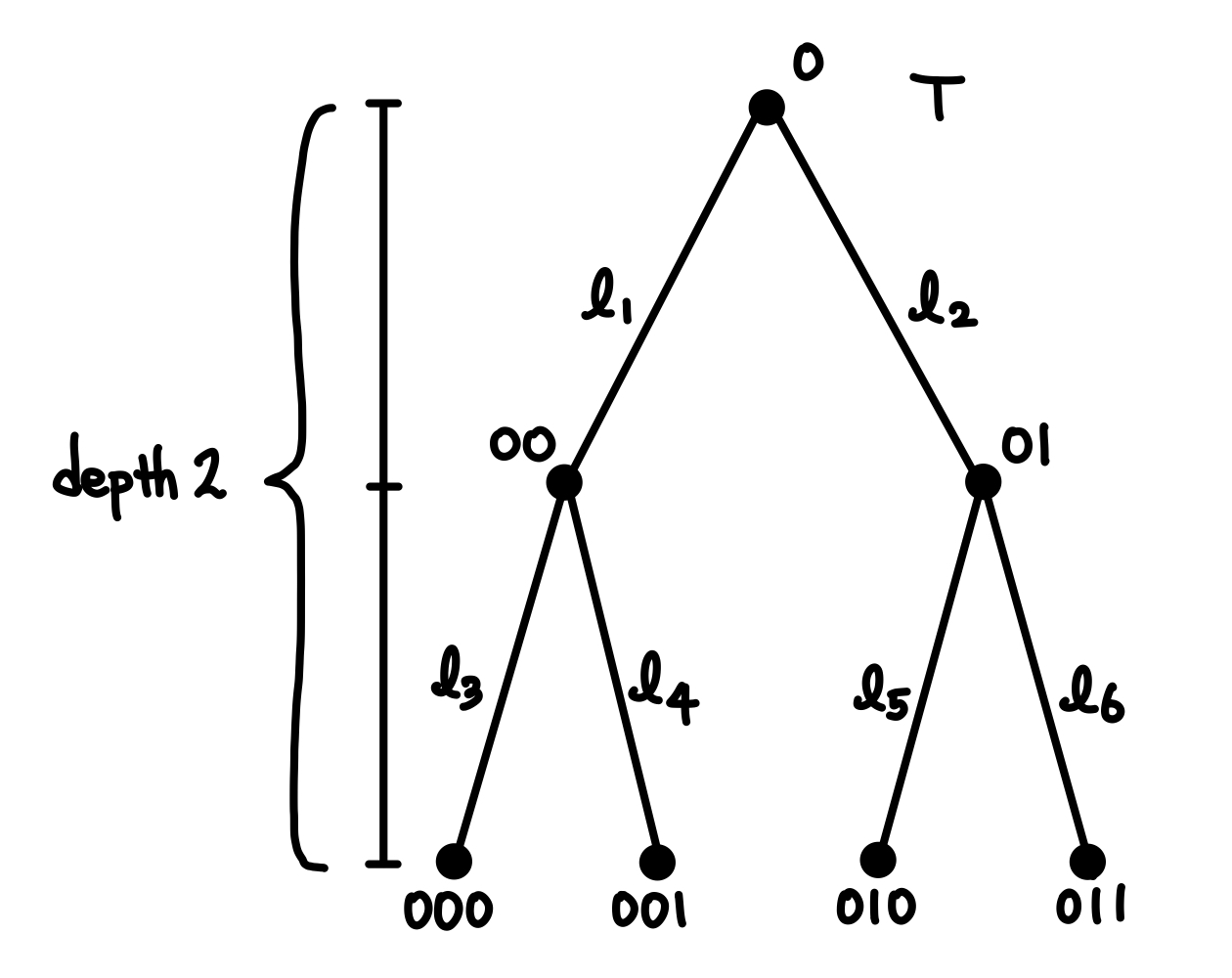}
    \caption{The canonical labeling for a binary tree $T$ of depth 2 with 7 vertices. The Kleene-Brouwer order on these vertices of $T$ induces the coordinate order of the distance vectors $\vec{l}$ in the tree distance set $\Delta^T(X)\subset \R^6$.}
    \label{fig:kleene-brouwer}
\end{figure}
We now state our main result in $\R^2$ as the following Theorem \ref{thm:tree}, which we prove in \S \ref{sec:tree}. Furthermore, the same proof in \ref{cor:2-chain-hausdorff-dimension-zero} gives a construction of a Cantor set in $\R^2$ of Hausdorff dimension 1 whose $T$ distance set has nonempty interior for a given finite tree $T$.

\begin{theorem}[Finite trees on subsets of $\R^2$]\label{thm:tree}\index{}
For each Cantor set $K \subset \mathbb{R}$ and finite tree $T$, there exists a Cantor set $\widetilde{K} \subset \R$ such that $\left( \Delta^T_{y} (K \times \widetilde{K} ) \right)^\circ \neq \varnothing$ for some $y\in K\times \widetilde{K}$.
\end{theorem}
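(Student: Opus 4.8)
The plan is to prove Theorem \ref{thm:tree} by induction on the depth of the tree $T$, reducing it to iterated applications of the pin wiggling lemma, Lemma \ref{lem:pin-wiggling}, exactly as in the passage from the $2$-chain case to the $n$-chain case but now organized along the branching structure of $T$. First I would fix a finite tree $T$, root it arbitrarily, and canonically label its vertices as described in \S\ref{sec:main-results}; write $T$ as having depth $m$. The key structural observation is that a pin wiggling lemma is precisely what lets us ``attach'' all children of a given vertex simultaneously: if we have already placed a point $p$ (a value of some $y^\tau$) and we know an open neighborhood $U$ of $p$ such that $\bigl(\bigcap_{z\in U}\Delta_z(K\times\widetilde K)\bigr)^\circ\neq\varnothing$, then for each child $\sigma$ of $\tau$ we may select a value $|y^\tau-y^\sigma|$ from this common interior interval, and the robustness over $z\in U$ guarantees these choices persist even as $p$ itself is later perturbed (which it will be, since its own parent edge gets chosen at a coarser stage). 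The subtlety that $\widetilde K$ must be built rather than chosen is handled as in the chain case: we use the diagonal of $K\times K$ to host ``skeleton'' points and invoke Lemma \ref{lem:pin-wiggling} finitely many times, each time enlarging the set we call $\widetilde K$.

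Concretely, I would run the construction level by level, from the root outward. At the root, pick any point $y^0$ of the form $(a,a)$ with $a\in K$ lying in a small interval $I_0$ meeting $K$; Lemma \ref{lem:pin-wiggling} applied with $v=y^0$ produces a Cantor set $\widetilde K_1\subset I_0$ with $|\widetilde K_1|>|K|$ and a neighborhood $U_0$ of $y^0$ such that $\bigl(\bigcap_{z\in U_0}\Delta_z(K\times\widetilde K_1)\bigr)^\circ$ contains an open interval $J_0$. Each child $\sigma$ of the root then gets a target distance $t_\sigma\in J_0$; since $t_\sigma$ lies in the common pinned distance set over all $z\in U_0$, and in particular the distance sphere of radius $t_\sigma$ about any $z\in U_0$ meets $K\times\widetilde K_1$, we may realize $y^\sigma$ as such an intersection point. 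Here is where the diagonal trick is again needed: we want $y^\sigma$ to be available as a future pin for \emph{its} children, so we choose $y^\sigma$ to lie on (or near) the diagonal of $K\times K$, using that $K$ is perfect to find such points in any prescribed small interval; we then recenter a small interval $I_\sigma$ about the first coordinate of $y^\sigma$ and apply Lemma \ref{lem:pin-wiggling} again with $v=y^\sigma$, obtaining a further Cantor set $\widetilde K_2\supset\widetilde K_1$ (taking unions of the finitely many Cantor sets produced) with $|\widetilde K_2|>|K|$, a neighborhood $U_\sigma$, and an interval $J_\sigma\subset\bigl(\bigcap_{z\in U_\sigma}\Delta_z(K\times\widetilde K_2)\bigr)^\circ$. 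Iterating over all vertices of depth $\le m-1$ — there are finitely many — yields a single Cantor set $\widetilde K$ (the union of all $\widetilde K_i$, still a Cantor set with $|\widetilde K|>|K|$) together with, for every non-leaf vertex $\tau$, a neighborhood $U_\tau$ of the chosen value $y^\tau$ and an open interval $J_\tau$ of ``legal'' edge-lengths from $\tau$ to each of its children that is robust to perturbing $y^\tau$ within $U_\tau$.

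To finish, I would observe that these robust intervals assemble into an open box inside $\Delta^T_{y^0}(K\times\widetilde K)$. Order the edges of $T$ by depth of the lower endpoint. Choosing the distance on each edge $\{\tau,\sigma\}$ (with $\tau$ the parent) freely in $J_\tau$ determines, by the realization step above, points $y^\sigma$ that can be taken to depend continuously on all the earlier choices and to stay within $U_\tau$ and within the relevant diagonal-interval, so that the next level's intervals $J_\sigma$ remain valid; distinctness of all the $y^\tau$ is arranged by shrinking the boxes, using that $K\times\widetilde K$ is infinite (perfect) so the finitely many skeleton points can be kept disjoint. Since each coordinate of the distance vector $\vec l$ ranges over the nonempty open interval $J_\tau$ for the corresponding edge, independently, the image contains $\prod_{\tau\text{ non-leaf}} (J_\tau)^{\#\text{children}(\tau)}$, an open subset of $\R^{n-1}$ where $n-1=\#E(T)$; hence $\bigl(\Delta^T_{y^0}(K\times\widetilde K)\bigr)^\circ\neq\varnothing$ with $y=y^0$. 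The main obstacle, as in the chain case, is bookkeeping the \emph{simultaneity and robustness}: one must verify that choosing a child's pin position and then later perturbing the parent's pin (when the grandparent edge is chosen) does not destroy the previously secured interior intervals — this is exactly the content of the ``$\bigcap_{z\in U_v}$'' in Lemma \ref{lem:pin-wiggling}, and the proof amounts to checking that the finitely many nested neighborhoods $U_\tau$ can be chosen compatibly along each root-to-leaf path. Everything else (enlarging $\widetilde K$ by finite unions, finding diagonal skeleton points in prescribed intervals, ensuring distinctness) is routine given that $K$ is a Cantor set and $\widetilde K$ always has positive measure.
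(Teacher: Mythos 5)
Your overall strategy (iterate Lemma \ref{lem:pin-wiggling} along the tree, enlarge $\widetilde K$ by a finite union, and realize an open box of edge lengths) is the same as the paper's, but the direction in which you run the recursion creates a genuine gap. Lemma \ref{lem:pin-wiggling} wiggles the \emph{parent} pin: it takes a fixed pin location $v$ and an interval $I$ where the \emph{child's} Cantor set will live, and returns $\widetilde K\subset I$ together with a small neighborhood $U_v$ of $v$ over which the interior is robust. Consequently the construction must be run from the leaves toward the root: the robustness neighborhood $\mathcal U_\tau$ of a vertex $\tau$ is produced by the applications of the lemma at $\tau$'s children, and only afterwards can one choose an interval $I_\tau$ with $I_\tau\times I_\tau\subset\mathcal U_\tau$ and build $\widetilde K_\tau\subset I_\tau$, so that any point of $K_\tau\times\widetilde K_\tau$ realizing the edge to $\tau$'s parent automatically lies inside $\mathcal U_\tau$ and is therefore a legitimate pin for the already-secured intervals below $\tau$. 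You instead run the construction root-outward, and you patch the resulting mismatch with two steps that do not hold: first, you claim the point $y^\sigma$ realizing a chosen distance $t_\sigma\in J_0$ can be ``chosen to lie on (or near) the diagonal of $K\times K$'' --- but $y^\sigma$ is whatever point of $K\times\widetilde K_1$ happens to lie on the circle of radius $t_\sigma$ about the pin; the only freedom is in $t_\sigma$, not in where the realizing point sits, and nothing places it on or near the diagonal (indeed $\widetilde K_1$ need not meet $K$ at all). Second, you assert the realizing points ``can be taken to depend continuously on all the earlier choices'' and to stay within $U_\tau$ --- the lemma gives bare existence of a realizing point, not a continuous or localized selection, and as $t_\sigma$ ranges over the open interval $J_0$ the realizing points need not stay in any single small neighborhood where a next-level interval has been secured. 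Without either of these, the product-of-intervals box is not shown to lie in $\Delta^T_{y^0}(K\times\widetilde K)$.

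The repair is precisely the reversed ordering used in the paper: fix in advance distinct skeleton points $\{x^\tau\}_{\tau\in T}$ on the diagonal of $K\times K$ with pairwise disjoint neighborhoods, and induct on the depth of $T$ from the bottom up --- apply the induction hypothesis to the subtrees hanging off the root, then apply Lemma \ref{lem:pin-wiggling} once per depth-one vertex $\sigma$, with pin $x^0$ and an interval contained in the already-constructed $\mathcal U_\sigma$, and intersect the resulting neighborhoods of $x^0$ to obtain $\mathcal U_0$. In the realization step every point $y^\sigma$ found in $K_\sigma\times\widetilde K_\sigma\subset\mathcal U_\sigma$ is then automatically an admissible pin for its children's secured intervals, distinctness comes for free from the disjointness of the $\mathcal U_\sigma$, and the box inclusion follows by induction on depth.
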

\begin{corollary}[]\label{cor:tree-hausdorff-dimension-zero}\index{}
For any finite tree $T$, there exists a Cantor set $C\subset \R^2$ such that $\dim_{\rm H}(C)=1$ and the pinned tree distance set $\Delta^T_{y} (C)$ has nonempty interior for some $y\in C$.
\end{corollary}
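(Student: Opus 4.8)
The plan is to mimic the proof of Corollary \ref{cor:2-chain-hausdorff-dimension-zero} essentially verbatim, with Theorem \ref{thm:2-chain} replaced by Theorem \ref{thm:tree}. First I would fix a Cantor set $K\subset\R$ with $\dim_{\rm H}(K)=0$; such sets exist (e.g.\ a sufficiently sparse Cantor-type set). Applying Theorem \ref{thm:tree} to this $K$ and the given finite tree $T$ produces a Cantor set $\widetilde{K}\subset\R$ together with a pin $y\in K\times\widetilde{K}$ for which $\left(\Delta^T_y(K\times\widetilde{K})\right)^\circ\neq\varnothing$. I would then set $C=K\times\widetilde{K}$ and claim this is the desired set: the interior condition is immediate from Theorem \ref{thm:tree}, so the only thing left to check is the dimension.

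For the dimension bookkeeping, the Cantor set $\widetilde{K}$ produced by the construction behind Theorem \ref{thm:tree} has positive Lebesgue measure — this is already true of the single building block in Lemma \ref{lem:pin-wiggling}, where $|\widetilde{K}|>|K|\geq 0$, and it persists through the iterated construction — so $\dim_{\rm H}(\widetilde{K})=\dim_B(\widetilde{K})=1$. Invoking the standard product dimension inequalities
\begin{align*}
\dim_{\rm H}(K)+\dim_{\rm H}(\widetilde{K})\leq \dim_{\rm H}(K\times\widetilde{K})\leq \dim_{\rm H}(K)+\dim_B(\widetilde{K})
\end{align*}
(see \cite[Equations 7.2 and 7.3]{falconer-fractal-geometry}) with $\dim_{\rm H}(K)=0$ and $\dim_B(\widetilde{K})=1$ forces $\dim_{\rm H}(C)=1$, completing the proof.

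There is essentially no obstacle here beyond what is already packaged into Theorem \ref{thm:tree}: this corollary is just that theorem combined with an off-the-shelf fact about Hausdorff dimension of products. If one wanted the sharper statement that $\dim_{\rm H}(C)$ can be made any prescribed $s\in[1,2]$, I would start instead from a Cantor set $K\subset\R$ with $\dim_{\rm H}(K)=s-1$ and run the identical argument, since the product formula then yields $\dim_{\rm H}(C)=s$ (again using that $\widetilde{K}$ has full box dimension). The one point to be slightly careful about is to confirm that the $\widetilde{K}$ output by Theorem \ref{thm:tree} genuinely has positive Lebesgue measure, since — unlike Theorems \ref{thm:2-chain} and \ref{thm:n-chain} — its statement as quoted does not explicitly record the inequality $|\widetilde{K}|>|K|$; this should simply be read off from the construction used in its proof.
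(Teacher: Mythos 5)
Your proposal is correct and is essentially the paper's own argument: the authors explicitly say that Corollary \ref{cor:tree-hausdorff-dimension-zero} follows by "the same proof" as Corollary \ref{cor:2-chain-hausdorff-dimension-zero}, i.e.\ take $\dim_{\rm H}(K)=0$, apply Theorem \ref{thm:tree}, and use the product dimension inequalities together with the fact that $\widetilde{K}$ has positive Lebesgue measure. Your remark that positivity of $|\widetilde{K}|$ must be read off from the construction (since $\widetilde{K}\supset\widetilde{K}_\tau$ with $|\widetilde{K}_\tau|>0$ from Lemma \ref{lem:pin-wiggling}) is exactly the right point to check and is consistent with the paper.
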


\subsection{Generalization to $\R^d$ and to other types of distances}

In this section, we generalize previous results to $\R^d$ and to generalized distances. 
We first generalize distances to maps satisfying certain derivative conditions, following \cite{McDonald-Taylor-finite-point-configurations}.

\begin{definition}[]\label{def:derivative-condition}\index{}
Let $\phi:\R^d\times \R^d \to \R$, $X\subset \R^d$, and $T$ be a finite tree.
\begin{enumerate}
\item $\phi$ is said to satisfy the \textit{derivative condition} if there exists a nonempty open set $\mathcal{U}\subset \R^{d}$ such that for every pair of distinct points $a, b\in \mathcal{U}$, there exist nonempty open subsets $A, B \subset \mathcal{U}$ such that $a\in A$, $b\in B$, and
\begin{align}\label{eq:derivative-condition}
\frac{\partial \phi (x,y)}{\partial y_{i}} \neq 0 \quad \quad \forall (x,y)\in A\times B \quad \forall i\in \{1,\cdots, d\}.
\end{align}
\item If $\phi$ satisfies the derivative condition, we define:
\begin{enumerate}
\item \textit{$\phi$ distance set} as $\Delta^\phi (X) = \{ \phi (x,y): x, y\in X \}$;
\item \textit{pinned $\phi$ distance set} with pin $x\in X$ as $\Delta^\phi_x (X) = \phi (x, X) = \{ \phi (x,y): y\in X \}$;
\item \textit{$(\phi,T)$ distance set} as 
\begin{equation*}
\Delta^{\phi,T} (X)= \{ (\phi (x^\tau, x^\sigma) )_{\tau \sim \sigma}: \tau, \sigma \in T, \ \{x^\tau \in X\}_{\tau \in T} \text{ distinct } \}.
\end{equation*}
\item \textit{pinned $(\phi,T)$ distance set} with a pin $x\in X$ as
\begin{equation}\label{distance T phi set}
\Delta^{\phi,T}_x (X)= \{ (\phi (x^\tau, x^\sigma) )_{\tau \sim \sigma}: \tau, \sigma \in T, \ \{x^\tau \in X\}_{\tau \in T} \text{ distinct }, x^0=x \}.
\end{equation}
\end{enumerate}
\end{enumerate}
\end{definition}

\begin{example} We give several familiar examples as special cases of $(\phi, T)$ distance sets. Let $X\subset \R^{d}$ and $T$ be a finite tree.
    \begin{enumerate}
\item (Tree distance sets) For $X\subset \R^{d}$, the tree distance set $\Delta^T (X)$ is a special case of the $(\phi, T)$ distance set $\Delta^{\phi, T}(X)$ with $\phi:\R^{d}\times \R^{d}\to \R$ given by $\phi(x,y)= |x-y|$. Note that $\phi$ satisfies the derivative condition since \eqref{eq:derivative-condition} holds for $\mathcal{U}=\R^d$ and any disjoint open subsets $A, B \subset \mathcal{U}$.
\item ($p$-norm distance sets) The above example also applies with $\phi(x,y) = |x-y|_p$, where $|\cdot |_p$ is the $p$-norm in $\R^{d}$ with $p\geq 1$.
\item (Dot product sets) The \textit{tree dot product set} of a set $X\subset \R^d$ and tree $T$ with $n$ vertices is defined by
\begin{align*}
\{ (x^\tau \cdot x^\sigma)_{\tau\sim \sigma} : \{x^\tau\}_\tau  \text{ are distinct points in } X\}\subset \R^{n-1},
\end{align*}
where $x\cdot y = \sum_{i=1}^d x_i y_i$. If $T$ is a $1$-chain, then we recover the usual dot product set of $X\subset \R^{d}$ defined by $\{x\cdot y : x, y\in X\}$. By choosing $\mathcal{U}$ to be an open subset in $\R^{d}$ not containing $0$, we see that $\eqref{eq:derivative-condition}$ holds for all open subsets $A,B$ of $\mathcal{U}$.
\end{enumerate}
\end{example}

Furthermore, our key Lemma \ref{lem:pin-wiggling} in $\R$ for distance sets readily generalizes to $\R^d$ and for maps that satisfy the derivative conditions.
\begin{lemma}[Pin wiggling lemma in higher dimensions]\label{lem:pin-wiggling-Rd}
Let $\phi:\R^{2d}\times \R^{2d}\to \R$ be a map that satisfies the derivative condition, $K\subset \R^d$ be a Cantor set, $v=(v_1,v_2)\in \R^d\times \R^d$, and $Q\subset \text{conv}(K)$ be a nonempty open cube such that $K\cap Q\neq \varnothing$. Then, there exist a Cantor set $\widetilde{K}\subset Q$ with $|\widetilde{K}|>|K|$ and an open neighborhood $\mathcal{U}_v\subset \R^{2d}$ of $v$ such that
\begin{align*}
\left( \bigcap_{z\in \mathcal{U}_v} \phi (z, K\times \widetilde{K}) \right)^\circ \neq \varnothing.
\end{align*}
\end{lemma}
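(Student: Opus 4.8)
The plan is to reduce Lemma~\ref{lem:pin-wiggling-Rd} to the main theorem of Jung and Lai, \cite[Theorem 1.1]{jung-lai-interior-of-certain}, by manufacturing from $\phi$ a $C^1$ map with non-vanishing Jacobian to which their theorem applies. First I would use the derivative condition to localize: apply the definition with the open set $\mathcal U\subset\R^{2d}$ it provides. We need a pair of distinct points to seed the construction; since $K$ is perfect and $K\cap Q\neq\varnothing$, we may pick two distinct points of $K$ inside $Q$ and sitting inside $\mathcal U$ (after possibly shrinking $Q$ — note $Q\subset\operatorname{conv}(K)$ guarantees $K\cap Q$ is infinite once $Q$ meets $K$). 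Call them $a,b$, and let $A,B\subset\mathcal U$ be the open neighborhoods from \eqref{eq:derivative-condition}, so that $\partial_{y_i}\phi(x,y)\neq 0$ on $A\times B$ for every coordinate $i\in\{1,\dots,2d\}$. Here the ``$x$'' slot ranges over $A$ and the ``$y$'' slot over $B$; in our setting $x=z$ will be the pin and $y\in K\times\widetilde K$ will be a point of the Cantor product.

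Next I would set up the Jung--Lai framework. Their Theorem~1.1 says: given a Cantor set $K_1\subset\R^d$ and a $C^1$ map $H:\R^N\times\R^d\times\R^d\to\R^d$ (or $\to\R^{d}$; one applies it componentwise / with the appropriate target) with non-vanishing Jacobian in the last two $\R^d$ slots on the relevant domain, there is a Cantor set $K_2$ and an open ball in $\R^N$ over which $H(z,K_1,K_2)$ has nonempty interior, with $|K_2|$ as large as we like — in particular $|K_2|>|K_1|$ — and $K_2$ contained in a prescribed open set. In our application $K_1$ will be $K\times K\subset\R^{2d}$ restricted to the diagonal trick described in the introduction: because we cannot choose skeleton points in $K\times\widetilde K$ before $\widetilde K$ exists, we take the ``first factor'' input to live in $K\times K$ (identifying a point of $K$ sitting on the diagonal), and the ``second factor'' to be the Cantor set $K_2$ that Jung--Lai produces; then $\widetilde K$ is built so that $K\times\widetilde K$ contains the needed points — concretely $\widetilde K\subset Q$ with $|\widetilde K|>|K|$. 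The map is $H(z,y_1,y_2)=\phi(z,(y_1,y_2))$ with $N=2d$; the $z$-variable is the pin we want to wiggle, and the ball of admissible $z$'s becomes the neighborhood $\mathcal U_v$ of $v$ once we arrange (by translating the whole configuration, or by a further shrink) that $v$ lies in that ball and that $v$'s slot is compatible with the ``$x\in A$'' part of the derivative condition.

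The main steps in order: (1) use the derivative condition to get $\mathcal U$, then pick distinct $a,b\in K\cap Q\cap\mathcal U$ and the resulting $A,B$; (2) arrange that the pin variable $v$ lies in (a translate/restriction of) the region $A$ and that $K\times\widetilde K$ will lie in $B$, so that $\partial_{y_i}\phi\neq 0$ throughout — this is exactly the non-vanishing-Jacobian hypothesis of \cite[Theorem 1.1]{jung-lai-interior-of-certain} since $\phi$ is then a submersion in the $y$-variables; (3) invoke Jung--Lai with $K_1$ coming from $K$ (via the diagonal-embedding device) and ambient box $Q$, obtaining a Cantor set $K_2=\widetilde K\subset Q$ with $|\widetilde K|>|K|$ and an open ball $\mathcal U_v$ of pins over which $\phi(z,K\times\widetilde K)\supset\phi(z, K_1\times K_2)$ has nonempty interior; (4) since the interior is produced uniformly in $z\in\mathcal U_v$, conclude $\big(\bigcap_{z\in\mathcal U_v}\phi(z,K\times\widetilde K)\big)^\circ\neq\varnothing$.

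The hard part will be step~(2)–(3): correctly matching the combinatorics of the ambient spaces — Jung--Lai's $H$ takes two Cantor sets in $\R^d$, while here we have one Cantor set $K\subset\R^d$ and want a product Cantor set in $\R^{2d}$, so we must feed in $K\times K$ (whose ``diagonal'' copy of $K$ supplies genuine points) as the \emph{first} $\R^{2d}$ input and let Jung--Lai build the \emph{second} $\R^{2d}$ input $\widetilde K$ (itself a product, or at least a Cantor set contained in the cube $Q$); and checking that the non-vanishing partials we extracted on $A\times B$ really do translate into the non-vanishing Jacobian Jung--Lai requires, with $z$ playing the role of their parameter rather than one of their Cantor variables. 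One also has to make sure the $|\widetilde K|>|K|$ conclusion — which Jung--Lai give as ``$|K_2|$ arbitrarily large'' — is compatible with $\widetilde K\subset Q$; this is fine because their construction already allows prescribing the ambient open set. The earlier, one-dimensional Lemma~\ref{lem:pin-wiggling} is the special case $d=1$, $\phi(x,y)=\|(x,y)-z\|$, and the present lemma is a routine-but-careful upgrade of that argument, so I would write the $\R^d$/general-$\phi$ proof as a direct adaptation, flagging only the two points above as requiring genuine care.
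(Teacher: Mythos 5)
There is a genuine gap, and it sits exactly at the step you flag as hard. Theorem A of Jung--Lai takes cubes $Q_1,Q_2\subset\R^d$, Cantor sets $K_1\subset Q_1$, $K_2\subset Q_2$, and a map $H:\Lambda\times Q_1\times Q_2\to\R^d$ whose $d\times d$ Jacobian in the last variable is \emph{invertible}; it cannot be invoked for the scalar map $H(z,x,y)=\phi(z,(x,y))$ when $d>1$, because the derivative of an $\R$-valued map in $y\in\R^d$ is a $1\times d$ row, not an invertible square matrix. Saying ``$\phi$ is a submersion in the $y$-variables'' does not restore the hypothesis, and applying the theorem ``componentwise'' is meaningless for a single scalar component. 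The paper's proof supplies precisely the missing idea: augment $\phi$ to the $\R^d$-valued map $H(z,x,y)=\bigl(\phi(z,(x,y)),y_2,\dots,y_d\bigr)$, whose Jacobian determinant in $y$ is $\partial\phi/\partial y_1$, nonvanishing on a product of small cubes by the derivative condition (applied to the pair consisting of the pin $v$ and a point of the form $(k,k)$ with $k\in K\cap Q$, both viewed in $\R^{2d}$); apply Theorem A to this $H$ with cubes $Q_1=Q_2\subset Q$ in $\R^d$ and a sub-Cantor set $K_1\subset K\cap Q_1$; and then recover the scalar conclusion by projecting onto the first coordinate, using that $\pi_1$ is an open map and that $\pi_1\bigl(\bigcap_{z}H(z,K_1,K_2)\bigr)\subset\bigcap_{z}\pi_1\bigl(H(z,K_1,K_2)\bigr)=\bigcap_{z}\phi(z,K_1\times K_2)$. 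Without this augmentation-and-projection device, your step (3) invokes a theorem whose hypotheses are not met.

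A second concrete problem is the bookkeeping of which Cantor sets are fed to Jung--Lai. You propose to input $K\times K\subset\R^{2d}$ as the first Cantor variable and let the theorem build the second input in $\R^{2d}$; but the lemma requires $\widetilde K\subset Q\subset\R^d$, and the relevant point set is $K\times\widetilde K\subset\R^{2d}$, not $(K\times K)\times\widetilde K\subset\R^{4d}$ (and a $2d$-dimensional Cantor input would in turn demand an invertible $2d\times 2d$ Jacobian, which $\phi$ cannot provide). The correct application keeps both Cantor variables in $\R^d$: the first slot ranges over $K_1\subset K\cap Q$, the second over $K_2=\widetilde K\subset Q$ produced by the theorem, and $z\in\R^{2d}$ is the Jung--Lai parameter ($N=2d$), which directly yields the neighborhood $\mathcal U_v$ of $v$. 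The ``diagonal'' device you import belongs to the proofs of the Theorems (choosing skeleton points in $K\times K$ before $\widetilde K$ exists), not to this lemma, and invoking it here is what produces the dimension mismatch; relatedly, the seed pair for the derivative condition should be $v$ together with a point of $(K\cap Q)\times(K\cap Q)$ in $\R^{2d}$, not two points of $K$ inside $Q$.
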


Using Lemma \ref{lem:pin-wiggling-Rd}, we obtain the most general result of this paper, Theorem \ref{thm:tree-Rd}, which we prove in \S \ref{sec:tree-Rd}. In particular, Theorem \ref{thm:2-chain}, Theorem \ref{thm:n-chain}, and Theorem \ref{thm:tree} are all special cases of Theorem \ref{thm:tree-Rd}. The same goes for our application of constructing Cantor sets with Hausdorff dimension $\frac{d}{2}$ and nonempty generalized distance sets.
\begin{theorem}[Finite trees on subsets of $\R^{2d}$]\label{thm:tree-Rd}\index{}
Let $\phi:\R^{2d}\times \R^{2d} \to \R$ be a map that satisfies the derivative condition in Definition \ref{def:derivative-condition} and let $K \subset \R^d$ be a Cantor set. Then, for every finite tree $T$, there exists a Cantor set $\widetilde{K} \subset \R^d$ with $|\widetilde{K}|>|K|$ such that $\left( \Delta^{\phi, T}_y (K \times \widetilde{K} ) \right)^\circ \neq \varnothing$ for some $y\in K\times \widetilde{K}$, where $\Delta^{\phi, T}_y(X)$ is the pinned tree distance set defined in \eqref{distance T phi set}.
\end{theorem}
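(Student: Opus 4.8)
The plan is to prove Theorem~\ref{thm:tree-Rd} by inducting on the tree $T$, using Lemma~\ref{lem:pin-wiggling-Rd} to add one ``generation'' of vertices at a time while simultaneously enlarging the Cantor set $\widetilde{K}$. The key observation — as flagged in the introduction — is that since $\widetilde{K}$ is not available in advance, the skeleton points from which the tree is built must be chosen on the diagonal of $K\times K \subset \R^{2d}$, which is genuinely part of $K \times \widetilde{K}$ once we replace $\widetilde{K}$ by $\widetilde{K} \cup K$ (after a harmless rescaling so that $K$ sits inside the cube where the new Cantor set is built). First I would set up notation: root the tree $T$ at $0$, let $d_T$ be its depth, and for each depth level $\ell$ enumerate the vertices $\tau$ of depth $\ell$ together with their parents $\tau^-$. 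The induction hypothesis at stage $\ell$ will assert the existence of a nested family of Cantor sets and, for each vertex $\tau$ processed so far, a designated open cube $Q_\tau \subset \mathrm{conv}(K)$ together with a chosen skeleton point $p_\tau$ on the diagonal, such that the partial tree distance set, intersected over all choices of the pins in small neighborhoods of the $p_\tau$'s, has nonempty interior.

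The core inductive step works as follows. Suppose we have built skeleton points $p_\tau$ (diagonal points of $K\times K$, so of the form $(c,c)$ with $c\in K$) for all vertices of depth $\le \ell$, living in disjoint open cubes $Q_\tau$, and a Cantor set $\widetilde{K}_\ell$ with the property that the pinned-partial-tree distances, taken with pins wiggled in neighborhoods $\mathcal{U}_{p_\tau}$, have interior. Now take a vertex $\tau$ of depth $\ell$ and its children $\sigma_1, \dots, \sigma_k$ of depth $\ell+1$. For each child $\sigma_j$ we pick a fresh sub-cube $Q_{\sigma_j}$ inside a suitable region of $\mathrm{conv}(K)$ (disjoint from all previously used cubes, which is possible because $K$ is perfect hence has infinitely many disjoint sub-cubes meeting it), pick the diagonal skeleton point $p_{\sigma_j} = (c_{\sigma_j}, c_{\sigma_j})$ with $c_{\sigma_j} \in K \cap Q_{\sigma_j}$, and apply Lemma~\ref{lem:pin-wiggling-Rd} with the Cantor set being (the relevant factor restricted to) the current set, $v = p_\tau$ playing the role of the wiggled pin, and the cube $Q_{\sigma_j}$. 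The lemma hands back a larger Cantor set $\widetilde{K}'$ inside $Q_{\sigma_j}$ and an open neighborhood $\mathcal{U}_{p_\tau}^{(j)}$ of $p_\tau$ so that $\bigl(\bigcap_{z \in \mathcal{U}_{p_\tau}^{(j)}} \phi(z, K\times \widetilde{K}')\bigr)^\circ \neq \varnothing$; this is precisely the statement that the edge $(\tau,\sigma_j)$ contributes an open interval of distances, robustly in the position of the pin $p_\tau$. Taking $\widetilde{K}_{\ell+1}$ to be the union of $\widetilde{K}_\ell$ with all these $\widetilde{K}'$ (one per new vertex), intersecting all the pin-neighborhoods, and using that the edges emanating from different vertices involve distinct coordinates of the distance vector, we get that the partial tree distance set through depth $\ell+1$ has nonempty interior. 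Here one must be a little careful that enlarging $\widetilde{K}$ to accommodate later vertices does not destroy earlier-established interiors — but the Cantor sets are built nested and inside disjoint cubes, and the interior of a distance set is only enlarged when the set grows, so monotonicity handles this. The measure condition $|\widetilde{K}| > |K|$ is preserved automatically since each application of Lemma~\ref{lem:pin-wiggling-Rd} already guarantees it and we only take unions.

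After the induction terminates at $\ell = d_T$, we have a Cantor set $\widetilde{K} = \widetilde{K}_{d_T}$ (replaced by $\widetilde{K}\cup K$ to make the diagonal skeleton points genuinely lie in $K\times\widetilde{K}$, which does not decrease any interior and keeps $\widetilde{K}$ a Cantor set of positive measure), and for the pin $y = p_0$ (the root skeleton point) the full pinned tree distance set $\Delta^{\phi,T}_y(K\times\widetilde{K})$ contains a product of the open intervals produced at each edge, hence has nonempty interior. Two routine but necessary bookkeeping points: first, one needs the points $\{y^\tau\}$ to be \emph{distinct}, which is arranged by placing the skeleton points in pairwise disjoint cubes and shrinking neighborhoods; second, one needs the ``derivative condition'' open set $\mathcal{U}$ from Definition~\ref{def:derivative-condition} to be large enough to contain all the relevant diagonal points — so at the very start one rescales and translates $K$ so that $\mathrm{conv}(K)$ (and its diagonal) sits inside $\mathcal{U}$, using the observation made in the examples that distance-type $\phi$ satisfy the condition on all of $\R^d$ anyway. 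I expect the main obstacle to be the careful orchestration of the nested Cantor sets and disjoint cubes so that a single $\widetilde{K}$ works simultaneously for every edge of the tree and every pin lies in the correct factor — i.e., making the simultaneous construction rigorous — rather than any single analytic estimate, since all the hard analysis is already packaged inside Lemma~\ref{lem:pin-wiggling-Rd}. The special cases (Theorems~\ref{thm:2-chain}, \ref{thm:n-chain}, \ref{thm:tree}) then follow by specializing $T$ to a path and $\phi(x,y)=|x-y|$, $d=1$.
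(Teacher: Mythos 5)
Your overall strategy (skeleton points on the diagonal of $K\times K$, repeated applications of Lemma \ref{lem:pin-wiggling-Rd}, and finally $\widetilde{K}:=K\cup\bigcup_\tau \widetilde{K}_\tau$) is the paper's, but your induction runs from the root toward the leaves, and that ordering breaks the argument at precisely the step you dismiss as bookkeeping. At stage $\ell$, a vertex $\tau$ of depth $\ell$ is hosted by a Cantor product (say $K_\tau\times\widetilde{K}_\tau$, inside the cube $Q_\tau$) that was fixed when the edge $(\tau^-,\tau)$ was handled; the realization point $y^\tau$ achieving a prescribed length $\phi(y^{\tau^-},y^\tau)=l^\tau$ can a priori be anywhere in that product. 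Only at stage $\ell+1$ do you apply the lemma with pin $v=p_\tau$ for the edges to the children of $\tau$, and the lemma returns \emph{some} open neighborhood $\mathcal{U}^{(j)}_{p_\tau}$ of $p_\tau$ over which the new intervals are robust; nothing forces $K_\tau\times\widetilde{K}_\tau\subset\mathcal{U}^{(j)}_{p_\tau}$, so nothing guarantees that the realized $y^\tau$ is an admissible pin for its children's edges. You also cannot repair this by shrinking the stage-$\ell$ product to fit inside $\mathcal{U}^{(j)}_{p_\tau}$, because the interior statement for the edge $(\tau^-,\tau)$ was established for that specific product and is not inherited by subsets. This is exactly the subtlety the paper's proof is organized around: the construction runs from the leaves toward the root (in the tree case, by induction on depth after deleting the root), so that when the edge from $\tau$ to its parent is treated, the neighborhood $\mathcal{U}_\tau$ in which all of $\tau$'s descendant edges are already robust is known in advance, and the cube fed to Lemma \ref{lem:pin-wiggling-Rd} is chosen inside $\mathcal{U}_\tau$, yielding the containment $K_\tau\times\widetilde{K}_\tau\subset\mathcal{U}_\tau$ (condition (i) of the paper's claim). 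With that containment the downward realization of the tree goes through; without it, your inductive step does not close.

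A secondary point: to handle the open set $\mathcal{U}$ in Definition \ref{def:derivative-condition} you propose to ``rescale and translate $K$,'' but $K$ and $\phi$ are both given in the statement of Theorem \ref{thm:tree-Rd}, so you are not free to move $K$; the derivative condition must be invoked locally near the chosen skeleton and pin points, as in the proof of Lemma \ref{lem:pin-wiggling-Rd}, rather than after normalizing $K$. Reversing the direction of your induction (children before parents) and adding the containment $K_\tau\times\widetilde{K}_\tau\subset\mathcal{U}_\tau$ to the inductive claim would bring your argument in line with the paper's proof.
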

\begin{corollary}[]\label{cor:tree-hausdorff-dimension-zero-Rd}\index{}
For any finite tree $T$, a map $\phi$ that satisfies the derivative condition, there exists a Cantor set $C\subset \R^{d}$ such that $\dim_{\rm H}(C)=\frac{d}{2}$ and the pinned $(\phi,T)$ distance set $\Delta^{\phi, T}_y (C)$ has nonempty interior for some $y\in C$.
\end{corollary}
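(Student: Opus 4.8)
The plan is to deduce this from Theorem~\ref{thm:tree-Rd} in exactly the way Corollary~\ref{cor:2-chain-hausdorff-dimension-zero} was deduced from Theorem~\ref{thm:2-chain}, now carrying along the tree $T$ and the generalized distance $\phi$. Since the asserted Hausdorff dimension is $d/2$ and the product construction below produces a set of integer dimension, I read the statement --- as in the phrasing of the abstract --- as concerning even ambient dimension; write $d=2m$. First I would fix a Cantor set $K\subset\R^m$ with $\dim_{\rm H}(K)=0$ (e.g.\ a sufficiently sparse central Cantor set in $\R^m$) and regard the given $\phi$ as a map $\R^{2m}\times\R^{2m}\to\R$, i.e.\ $\R^{d}\times\R^{d}\to\R$, satisfying the derivative condition of Definition~\ref{def:derivative-condition}.

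Next, I would apply Theorem~\ref{thm:tree-Rd} with $m$ in the role of ``$d$'' there: for the given finite tree $T$ there is a Cantor set $\widetilde{K}\subset\R^m$ with $|\widetilde{K}|>|K|$, in particular $|\widetilde{K}|>0$, such that $\bigl(\Delta^{\phi,T}_{y}(K\times\widetilde{K})\bigr)^\circ\neq\varnothing$ for some $y\in K\times\widetilde{K}\subset\R^{2m}=\R^d$. Put $C=K\times\widetilde{K}$. Then $C$ is a Cantor set --- it is compact, perfect (a product of perfect sets is perfect) and totally disconnected (a product of totally disconnected sets is totally disconnected) --- and by construction $\Delta^{\phi,T}_{y}(C)^\circ\neq\varnothing$.

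It remains to pin down $\dim_{\rm H}(C)$. Since $\widetilde{K}\subset\R^m$ has positive $m$-dimensional Lebesgue measure, $\dim_{\rm H}(\widetilde{K})=\dim_{B}(\widetilde{K})=m$, while $\dim_{\rm H}(K)=0$. Feeding these into the product dimension inequalities \cite[Equation 7.2 and 7.3]{falconer-fractal-geometry}
\begin{align*}
\dim_{\rm H}(K)+\dim_{\rm H}(\widetilde{K})\ \le\ \dim_{\rm H}(K\times\widetilde{K})\ \le\ \dim_{\rm H}(K)+\dim_{B}(\widetilde{K})
\end{align*}
gives $m\le\dim_{\rm H}(C)\le m$, so $\dim_{\rm H}(C)=m=d/2$, as desired. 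This argument is routine once Theorem~\ref{thm:tree-Rd} is available; the only genuine points of care are the parity convention $d=2m$ (without which one cannot land exactly on $d/2$ via a product with a full-measure factor) and the observation that the two dimension inequalities pinch, which relies precisely on the conclusion $|\widetilde{K}|>|K|\ge 0$ supplied by Theorem~\ref{thm:tree-Rd}. If one instead wants $\dim_{\rm H}(C)=s$ for an arbitrary $s\in[d/2,d]$, it suffices to start from a Cantor set $K\subset\R^m$ with $\dim_{\rm H}(K)=s-m$.
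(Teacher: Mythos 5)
Your proposal is correct and follows exactly the route the paper intends: the paper itself only remarks that this corollary follows ``by the same argument as Corollary \ref{cor:2-chain-hausdorff-dimension-zero}'', i.e.\ start from a Hausdorff-dimension-zero Cantor set $K$, apply Theorem \ref{thm:tree-Rd} to get $\widetilde{K}$ of positive measure, set $C=K\times\widetilde{K}$, and pinch the dimension with the product inequalities. Your explicit handling of the parity convention $d=2m$ (matching the paper's ``even $d$'' phrasing in the introduction) and the remark that one can hit any $s\in[d/2,d]$ are accurate refinements of the same argument, not a different approach.
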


\section{Proof of Lemma \ref{lem:pin-wiggling}}\label{sec:lem}
To prove Lemma \ref{lem:pin-wiggling}, we use the following result in \cite[Theorem 1.1]{jung-lai-interior-of-certain}. It is interesting to note that Theorem \ref{thm:jung-lai} does not rely on the Newhouse gap lemma, but rather on what the authors call the containment lemma. 

\begin{theorem*}{A}[Jung-Lai]\label{thm:jung-lai}
    Let $N\ge 1$ and $\Lambda\subset \R^N$ be a set with interior and let $\alpha_0\in \Lambda^{\circ}$. Let $Q_1 \subset Q_2\subset \R^d$ be open cubes and $Q_1 \ne \varnothing$.  Let $H:\Lambda\times Q_1\times Q_2 \to \R^d$ be a $C^1$ function such that the Jacobian on  $Q_2$ is invertible. Then for all Cantor sets $K_1\subset Q_1$, there exists a Cantor set $K_2\subset Q_2$ with $|K_2|>|K_1|$ and an open cube centered at $\alpha_0$, ${\mathtt Q}_{\epsilon}(\alpha_0) \subset \Lambda$ ($\epsilon>0$) such that 
    $$
    \left(\bigcap_{\alpha\in {\mathtt Q}_{{\epsilon}}(\alpha_0)} H(\alpha, K_1,K_2)\right)^{\circ}\ne \varnothing.
    $$
\end{theorem*}
Recall that if $\Lambda \subset \R^N$, $Q_1, Q_2 \subset \R^d$, and $H:\Lambda \times Q_1 \times Q_2 \to \R^d$ is a $C^1$ function, then denoting  $H(\lambda, x, y)=(H_1 (\lambda, x, y), \cdots, H_d (\lambda, x, y))$, the \textit{Jacobian} of $H$ on $Q_2$ is defined as the following $d\times d$ matrix:
\begin{align*}
    \left(\frac{\partial H_i(\lambda, x, y)}{\partial y_j}\right)_{i,j=1}^d.
\end{align*}

\begin{proof}[Proof of Lemma \ref{lem:pin-wiggling}]
We first prove a claim.

\noindent \textit{Claim}. There exist a nonempty open interval $Q\subset I$ and an open neighborhood $\Lambda_2$ of $v_2$ in $\R$ disjoint from $Q$ such that $K \cap Q \neq \varnothing$ contains a sub-Cantor set of $K$.

\begin{proof}[Proof of the claim]
This is straightforward if $v_2\not\in I$. If $v_2 \in I$, then let $p\in (K\cap I)\setminus \{v_2\}$. Then, there exist disjoint open intervals $Q, \Lambda_2 \subset I$ that separate $p$ and $v_2$, i.e., $p\in Q$ and $v_2\in \Lambda_2$. Since $Q$ contains a closed interval that contains $p$, $K\cap Q$ must contain a sub-Cantor set of $K$. This finishes the proof of the claim.
\end{proof}

Returning to the main proof, we are now ready to use Theorem \ref{thm:jung-lai}. Let $Q_1=Q_2=Q$ and $\Lambda = \R\times \Lambda_2$. Define
\begin{align*}
    H:\Lambda\times \R \times \R \to \R : (z,x,y)\mapsto \|(x,y)-z\|_2,
\end{align*}
where $z=(z_1,z_2)\in \R^2$ and $\|\cdot \|_2$ is the Euclidean norm in $\R^2$. By (i) of the above claim, we have
\begin{align*}
    \left|\frac{\partial H (z, x, y)}{\partial y}\right| = \frac{|y-z_2|}{\|(x,y)-z\|_2}>0 \quad \forall (z, x, y)\in \Lambda\times Q_1 \times Q_2.
\end{align*}
Hence, the Jacobian of $H$ on $I$ is invertible and the conditions of Theorem \ref{thm:jung-lai} are satisfied. By (ii) of the above claim, there exists a Cantor set $K_1\subset K\cap Q_1$. By Theorem \ref{thm:jung-lai}, there exist a Cantor set $K_2 \subset Q_2$ with $|K_2|>|K_1|$ and an open neighborhood $U_v\subset \Lambda$ of $v$ such that
\begin{align*}
     \left(\bigcap_{z\in U_v} H(z, K_1, K_2)\right)^\circ \neq \varnothing.
\end{align*}
Take $\widetilde{K}=K_2$ as our desired Cantor set. Then, since $K_1 \subset K$, we have
\begin{align*}
    \left(\bigcap_{z\in U_v} \Delta_z(K\times \widetilde{K})\right)^\circ=\left(\bigcap_{z\in U_v} H(z, K, \widetilde{K})\right)^\circ \supset \left(\bigcap_{z\in U_v} H(z, K_1, K_2)\right)^\circ \neq \varnothing.
\end{align*}
This completes the proof.
\end{proof}

\section{Proof of Theorem \ref{thm:2-chain}}\label{sec:2-chain}
\begin{proof}[Proof of Theorem \ref{thm:2-chain}]
Let $x^0, x^1, x^2 \in K \times K $ be three distinct points 
on the diagonal line $\{y=x\}$. Let $\mathcal{U}_2$ be an open neighborhood of $x^2$ in $\R^2$ such that $x^0, x^1\not\in \mathcal{U}_2$.
Since $x^2\in K\times K$ is in the diagonal line, there exists a nonempty open interval $I\subset \R$ such that $x^2 \in I\times I\subset \mathcal{U}_2$.
Applying Lemma \ref{lem:pin-wiggling} to $K_2=K\cap I$, $x^1$, and $I$, there exist a Cantor set $\widetilde{K}_2\subset I$ and an open neighborhood $\mathcal{U}_1$ of $x^1$ in $\R^2$ such that $K_2\times \widetilde{K}_2 \subset \mathcal{U}_2$ and
\begin{align}\label{eq:2-chain-interior-1}
    \left(\bigcap_{x\in \mathcal{U}_1} \Delta_x (K_2 \times \widetilde{K}_2) \right)^\circ \neq \varnothing.
\end{align}
Note that we may assume $\mathcal{U}_1$ does not contain $x^0$ by replacing it with a suitable open subset. Repeating the same argument with $\mathcal{U}_1$ in place of $\mathcal{U}_2$, $x^1$ in place of $x^2$, and $x^0$ in place of $x^1$, there exist a sub-Cantor set $K_1\subset K$, a Cantor set $\widetilde{K}_1$, and an open neighborhood $\mathcal{U}_0$ of $x^0$ in $\R^2$ such that $K_1\times \widetilde{K}_1 \subset \mathcal{U}_1$ and
\begin{align}\label{eq:2-chain-interior-2}
    \left(\bigcap_{x\in \mathcal{U}_0} \Delta_x (K_1 \times \widetilde{K}_1)\right)^\circ \neq \varnothing.
\end{align}
Let $\widetilde{K} = K\cup \left(\bigcup_{i=1}^2 \widetilde{K}_i\right)$ and $y=y^0 \in \mathcal{U}_0 \cap (K\times \widetilde{K})$, which is nonempty since it contains $x^0$. To prove that $\left(\Delta^2_{y} (K\times \widetilde{K})\right)^\circ \neq \varnothing$, by \eqref{eq:2-chain-interior-1} and \eqref{eq:2-chain-interior-2}, it suffices to show that
\begin{align}
    \Delta^2_{y} (K\times \widetilde{K}) \supset \left( \bigcap_{x\in \mathcal{U}_0} \Delta_x (K_1 \times \widetilde{K}_1) \right)^\circ \times \left( \bigcap_{x\in \mathcal{U}_1} \Delta_x (K_2 \times \widetilde{K}_2) \right)^\circ. \label{eq:2-chain-3}
\end{align}
To see that \eqref{eq:2-chain-3} holds, let
\begin{align*}
    (l^1,l^2) \in \left( \bigcap_{x\in \mathcal{U}_0 } \Delta_x (K_1 \times \widetilde{K}_1) \right)^\circ \times \left( \bigcap_{x\in \mathcal{U}_1} \Delta_x (K_2 \times \widetilde{K}_2) \right)^\circ.
\end{align*}
Then, since $l^1\in \left( \bigcap_{x\in \mathcal{U}_0} \Delta_x (K_1 \times \widetilde{K}_1) \right)^\circ$ and $y^0\in \mathcal{U}_0$, there exists $y^1\in K_1 \times \widetilde{K}_1\subset \mathcal{U}_1$ such that $l^1=|y^0-y^1|$. Also, since $l^2\in \left( \bigcap_{x\in \mathcal{U}_1} \Delta_x (K_2 \times \widetilde{K}_2) \right)^\circ$ and $y^1\in K_1 \times \widetilde{K}_1\subset \mathcal{U}_1$, there exists $y^2\in K_2 \times \widetilde{K}_2 \subset K\times \widetilde{K}$ such that $l^2=|y^1-y^2|$. Thus,
\begin{align*}
(l^1,l^2)\in \{ (|y^0-y^1|,|y^1-y^2|): y^1,y^2\in K\times \widetilde{K} \}= \Delta_y^{(2)} (K\times \widetilde{K}).
\end{align*}
This shows \eqref{eq:2-chain-3} and the proof is complete.
\end{proof}

\section{Proof of Theorem \ref{thm:n-chain}}\label{sec:n-chain}
\begin{proof}[Proof of Theorem \ref{thm:n-chain}]
Let $x^0, x^1, x^2, \cdots, x^{n-1} \in K \times K $ be $n$ distinct points lying on the diagonal line. We first prove the following claim, which is illustrated in Figure \ref{fig:5-chain}.

\noindent\textit{Claim.} There exist sub-Cantor sets $K_i$ of $K$, Cantor sets $\widetilde{K}_i\subset \R$ for each $i\in \{1, 2, \cdots, n-1\}$ and pairwise disjoint open neighborhoods $\mathcal{U}_j$ of $x^j$ for each $j\in \{0,1,\cdots, n-1\}$ such that for each $i\in \{1, 2, \cdots, n-1\}$,
\begin{enumerate}[(i)]
\item $K_i\times \widetilde{K}_i \subset \mathcal{U}_{i}$;
\item $\left(\bigcap_{x\in \mathcal{U}_{i-1}} \Delta_x (K_{i}\times \widetilde{K}_{i})\right)^\circ \neq \varnothing$;
\end{enumerate}
\begin{proof}[Proof of the claim]
We proceed inductively in reverse order. Let $\mathcal{U}_{n-1}$ be an open neighborhood of $x^{n-1}$ in $\R^2$ such that $x^0,\cdots, x^{n-2} \not\in \mathcal{U}_{n-1}$. Since $x^{n-1}\in K\times K$ lies in the diagonal line, there exists a nonempty open interval $I_{n-1}\subset \R$ such that $x^{n-1}\in I_{n-1}\times I_{n-1} \subset \mathcal{U}_{0i}$.
Then, applying Lemma \ref{lem:pin-wiggling} to $K_{n-1}=K\cap I_{n-1}$, $x^{n-2}$, and $I_{n-1}$, there exist a Cantor set $\widetilde{K}_{n-1} \subset I_{n-1}$ and an open neighborhood $\mathcal{U}_{n-2}$ of $x^{n-2}$ in $\R^2$ such that
\begin{align*}
K_{n-1}\times \widetilde{K}_{n-1}\subset \mathcal{U}_{n-1} \quad \text{and} \quad \left(\bigcap_{x\in \mathcal{U}_{n-2}} \Delta_x (K_{n-1}\times \widetilde{K}_{n-1})\right)^\circ \neq \varnothing.
\end{align*}
Suppose that, for some $j\in\N$, we have obtained the desired sets
\begin{align*}
\{\mathcal{U}_j\} \cup \{(K_i, \widetilde{K}_{i}, \mathcal{U}_i): i\in \{j+1,j+2, \cdots, n-1\}\},
\end{align*}
such that (i)-(ii) hold for $i\in\{j+1, j+2, \cdots, n-1\}$. Similarly as before, since $x^{j}\in K\times K$ is a point on the diagonal line, there exists a nonempty open interval $I_j\subset \R$ such that $x^{j}\in I_{j}\times I_j \subset \mathcal{U}_{j}$. Thus, using Lemma \ref{lem:pin-wiggling} on $K_{j}=K\cap I_j$, $x^{j-1}$, and $I_j$, we deduce that there exist a Cantor set $\widetilde{K}_j\subset I_j$ and an open neighborhood $\mathcal{U}_{j-1}$ of $x^{j-1}$ in $\R^2$ such that (i)-(ii) hold for $i\in\{j, j+1, \cdots, n-1\}$. Note that we may restrict $\mathcal{U}_{j-1}$ to its subset so that $\mathcal{U}_{j-1}, \cdots, \mathcal{U}_{n-1}$ are pairwise disjoint. Proceeding this process down to $j=1$, we obtain the desired sub-Cantor sets $K_i \subset K$, Cantor sets $\widetilde{K}_i\subset \R$, and open neighborhoods $\mathcal{U}_j$ of $x^j$ for each $i\in \{1,2, \cdots, n-1\}$ and $j\in \{0,1,\cdots, n-1\}$ that satisfy (i)-(ii).
\end{proof}

We now return to the main proof. Define $\widetilde{K} = K\cup \left(\bigcup_{i=1}^{n-1} \widetilde{K}_i\right)$ and let $y=y^0\in \mathcal{U}_0 \cap (K\times \widetilde{K})$, which is nonempty since it contains $x^0$. By (ii) of the above claim, to prove that $\Delta^n_{y} (K\times \widetilde{K})$ has nonempty interior, it suffices to show that
\begin{align}
    \Delta^n_{y} (K\times \widetilde{K}) \supset \left( \bigcap_{x\in \mathcal{U}_0} \Delta_x (K_1 \times  \widetilde{K}_1) \right)^\circ \times \cdots  \times \left( \bigcap_{x\in \mathcal{U}_{n-2}} \Delta_x (K_{n-1} \times \widetilde{K}_{n-1}) \right)^\circ. \label{eq:inclusion}
\end{align}
To prove \eqref{eq:inclusion}, let
\begin{align*}
    (l^0, \cdots, l^{n-2}) &\in \left( \bigcap_{x\in \mathcal{U}_0} \Delta_x (K_1 \times  \widetilde{K}_1) \right)^\circ \cdots \times \left( \bigcap_{x\in \mathcal{U}_{n-2}} \Delta_x (K_{n-1} \times \widetilde{K}_{n-1}) \right)^\circ.
\end{align*}
\eqref{eq:inclusion} holds if we find distinct points $y^1, y^2, \cdots, y^{n-1} \in K\times \widetilde{K}$ (different from $y^0$) such that
\begin{equation}\label{eq:1}
    l^{i+1}=|y^i-y^{i+1}| \quad (i\in \{0,\cdots, n-2\}).
\end{equation}
Each $y^i$ will be in $\mathcal{U}_i$ and we use the claim above to find these points inductively. Since $l^0\in \left( \bigcap_{x\in \mathcal{U}_0} \Delta_x (K_1 \times \widetilde{K}_1) \right)^\circ$ and $y^0\in \mathcal{U}_0$, there exists $y^1\in K_1 \times \widetilde{K}_1$ such that $l^0=|y^0-y^1|$. Next, since $l^1 \in \left( \bigcap_{x\in \mathcal{U}_1} \Delta_x (K_2 \times \widetilde{K}_2) \right)^\circ$ and $y^1\in \mathcal{U}_1$ by (i), there exists $y^2\in K_2 \times \widetilde{K}_2$ such that $l^1=|y^1-y^2|$. Note that $y^1, y^2$ are distinct since $\mathcal{U}_0$ and $\mathcal{U}_1$ are disjoint. Suppose that for some $j\in \N$, we have found distinct points $y^0, \cdots, y^j\in K\times \widetilde{K}$ such that $y^i \in \mathcal{U}_i$ and $l^i=|y^i-y^{i+1}|$ for each $i=0,1,\cdots, j$. Then, since $l^{j+1} \in \left(\bigcap_{x\in \mathcal{U}_{j}} \Delta_x (K_{j+1}\times \widetilde{K}_{j+1})\right)^\circ$, there exists $y^{j+1}\in K_{j+1}\times \widetilde{K}_{j+1}$ such that $l^{j+1} =|y^j-y^{j+1}|$. By (i), $y^{j+1}\in \mathcal{U}_{j+1}$, and since $\{\mathcal{U}_i: i\in \{0, 1, \cdots, j+1\}\}$ are pairwise disjoint, $y^0,\cdots, y^{j+1}$ are distinct. Proceeding upto $j=n-2$, we obtain all the desired distinct points $y^0, y^1,\cdots, y^{n-1}\in K\times \widetilde{K}$ such that \eqref{eq:1} holds. This proves \eqref{eq:inclusion}, and the proof is complete.
\end{proof}
\begin{figure}
\centering
\includegraphics[scale=0.33]{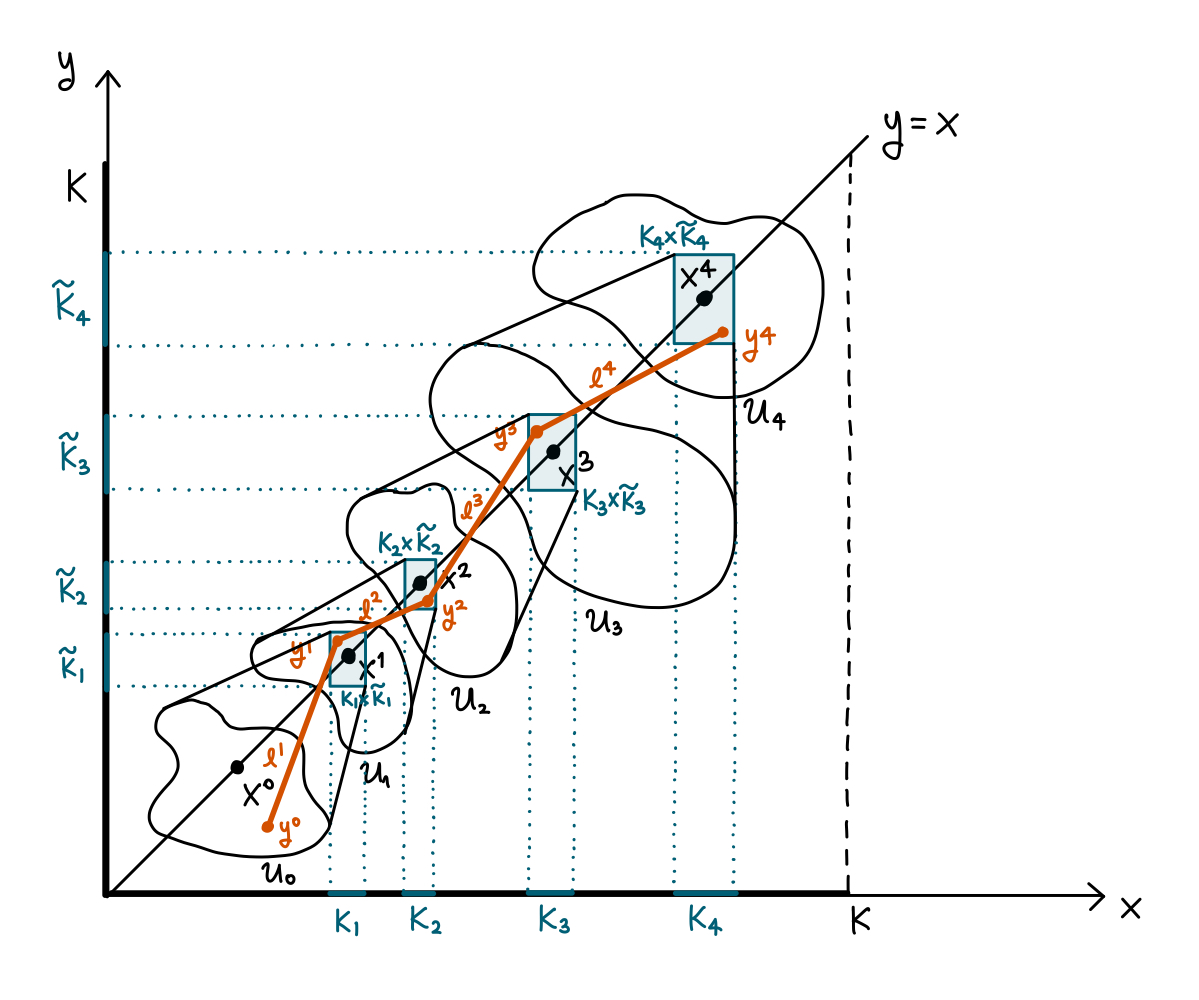}
\caption{An example of $5$-chain in the proof of Theorem \ref{thm:n-chain} in \S \ref{sec:n-chain}.}
\label{fig:5-chain}
\end{figure}

\section{Proof of Theorem \ref{thm:tree}}\label{sec:tree}
\begin{proof}[Proof of Theorem \ref{thm:tree}]
We first need the following claim, which is illustrated in figure \ref{fig:5-tree}.

\noindent\textit{Claim.} For every finite tree $T$ with root $0$ and distinct points $\{x^\tau \in K\times K: \tau \in T\}$, there exist sub-Cantor sets $K_\tau$ of $K$, Cantor sets $\widetilde{K}_\tau \subset \R$ for each $\tau \in T \setminus \{0\}$ and pairwise disjoint open neighborhoods $\mathcal{U}_\sigma$ of $x^\sigma$ for each $\sigma \in T$ such that for each $\tau \in T\setminus \{0\}$,
\begin{enumerate}[(i)]
\item $K_\tau\times \widetilde{K}_\tau \subset \mathcal{U}_{\tau}$;
\item $\left(\bigcap_{x\in \mathcal{U}_{\tau'}} \Delta_x (K_{\tau}\times \widetilde{K}_{\tau})\right)^\circ \neq \varnothing$, where $\tau'$ is the unique parent of $\tau$.
\end{enumerate}

\begin{proof}[Proof of the claim.]
We prove by induction on the depth of trees. For the base case, let $T$ be a tree of depth 1 with $n\geq 2$ vertices $0, 00, 01, \cdots, 0(n-2)$. Let $\{x^\tau \in K\times K: \tau \in T\}$ be distinct points and $\{\mathcal{U}_{0i}: i\in \{0, 1, \cdots, n-2\}\}$ be pairwise disjoint open sets in $\R^2$, where $\mathcal{U}_{0i}$ is an open neighborhoods of $x^{0i}$ in $\R^2$ and $x^0, x^{0j}\not\in \mathcal{U}_{0i}$ for each $j\neq i$. Note that since $x^{0i}\in K\times K$ lies in the diagonal line, there exists a nonempty open interval $I_i\subset \R$ such that $x^{0i}\in I_i\times I_i \subset \mathcal{U}_{0i}$. Then, for each $i\in \{0,\cdots, n-2\}$, applying Lemma \ref{lem:pin-wiggling} to $K_{0i} = K\cap I_{0i}$, $x^{0}$, and $I_i$, we deduce that there exist a Cantor set $\widetilde{K}_{0i} \subset I_i$ and an open neighborhood $\mathcal{V}_{0}^i$ of $x^0$ in $\R^2$ such that $K_{0i}\times \widetilde{K}_{0i}\subset \mathcal{U}_{0i}$ and
\begin{align*}
\left(\bigcap_{x\in \mathcal{V}_{0}^i} \Delta_x (K_{0i}\times \widetilde{K}_{0i})\right)^\circ \neq \varnothing.
\end{align*}
Define $\mathcal{U}_0 = \bigcap_{i=0}^{n-2} \mathcal{V}_0^i$, which is an open neighborhood of $x^0$ in $\R^2$, which we may assume, by taking its suitable open subset, to be disjoint from $U_{0i}$ for all $i\in\{0,\cdots, n-2\}$. Then, (i)-(ii) holds for the tree $T$ of depth 1.

For the induction step, suppose that the claim holds for all trees of depth at most $d$. Let $T$ be a tree of depth $d+1$ with root $0$ and $\{x^\tau \in K\times K: \tau \in T\}$ be distinct points. Denote $T(k)=\{\sigma \in T: \sigma \text{ has depth } k\}$ to be the vertices of $T$ of depth $k$ and for each $\sigma \in T$, denote $T_\sigma=\{\tau\in T: \sigma<\tau\}$ to be the subtree of $T$ that has $\sigma$ as its root and includes all of its vertices below it. Since the tree $T$ with the root $0$ removed is a union of disconnected trees $\{T_\sigma: \sigma \in  T (1)\}$ of depth at most $d$, by the induction hypothesis, for each $\sigma\in T (1)$, we have all the desired sub-Cantor sets $K_\tau$ of $T$ and Cantor sets $\widetilde{K}_\tau \subset \R$ except for $\tau\in T(1)$ and all the desired pairwise disjoint open neighborhoods $\mathcal{U}_\sigma$ of $x^\sigma$ except for $\sigma=0$. Similar to above, for each $\sigma \in  T (1)$, since $x_\sigma$ lies in the diagonal line, Lemma \ref{lem:pin-wiggling} states that there exist a sub-Cantor set $K_\sigma\subset K$, a Cantor set $\widetilde{K}_\sigma$, and an open neighborhood $\mathcal{V}_0^\sigma$ of $x^0$ in $\R^2$ such that
\begin{align*}
    K_\sigma \times \widetilde{K}_\sigma \subset \mathcal{U}_\sigma \quad \text{and} \quad \left(\bigcap_{x\in \mathcal{V}_{0}^\sigma} \Delta_x (K_{0i}\times \widetilde{K}_{0i})\right)^\circ \neq \varnothing.
\end{align*}

Define $\mathcal{U}_0=\bigcap_{\sigma\in T (1)} \mathcal{V}_0^\sigma$, which is an open neighborhood of $x^0$ in $\R^2$, and note that we may assume it to be disjoint from $\mathcal{U}_\tau$ for each $\tau \neq 0$ by taking a suitable open subset. This shows that the claim holds for all trees of depth $d+1$, and hence, by induction on the height of trees, the claim holds for all finite trees $T$.
\end{proof}

We now return to the main proof. Let $T$ be a finite tree of depth $d$ and $\{x^\tau \in K\times K: \tau \in T\}$ be distinct points. Using the claim above, define $\widetilde{K} = K\cup \bigcup_{\tau\in T\setminus \{0\}} \widetilde{K}_\tau$. Let $y=y^0 \in \mathcal{U}_0 \cap (K\times \widetilde{K})$. By (ii), to prove that $\left(\Delta^T (K\times \widetilde{K})\right)^\circ \neq \varnothing$, it suffices to show
\begin{align}
    \Delta^T (K\times \widetilde{K}) &\supset \underbrace{\left( \bigcap_{x\in \mathcal{U}_{{\sigma_{1,0}}'}} \Delta_x (K_{\sigma_{1,0}} \times  \widetilde{K}_{\sigma_{1,0}}) \right)^\circ \times \cdots \times \left( \bigcap_{x\in \mathcal{U}_{{\sigma_{1,n_1}}'}} \Delta_x (K_{\sigma_{1,n_1}} \times  \widetilde{K}_{\sigma_{1,n_1}}) \right)^\circ}_{ T (1)=\{\sigma_{1,0}, \cdots, \sigma_{1, n_1} \}}\times \nonumber\\
    \ & \vdots \nonumber \\
    &\times \underbrace{\left( \bigcap_{x\in \mathcal{U}_{{\sigma_{d,0}}'}} \Delta_x (K_{\sigma_{d,0}} \times  \widetilde{K}_{\sigma_{d,0}}) \right)^\circ \times \cdots \times \left( \bigcap_{x\in \mathcal{U}_{{\sigma_{d,n_d}}'}} \Delta_x (K_{\sigma_{d,n_d}} \times  \widetilde{K}_{\sigma_{d,n_d}}) \right)^\circ}_{ T (d)=\{\sigma_{d,0}, \cdots, \sigma_{d,n_d}\}},\label{eq:n-tree-inclusion}
\end{align}
where $\sigma'$ is the unique parent of $\sigma$ in tree $T$. To prove \eqref{eq:n-tree-inclusion}, let
\begin{align*}
    \vec{l}_T=\left(\underbrace{l^{\sigma_{1,0}}, \cdots, l^{\sigma_{1,n_1}}}_{T (1)=\{\sigma_{1,0}, \cdots, \sigma_{1,n_1}\}}, \cdots, \underbrace{l^{\sigma_{1,0}}, \cdots, l^{\sigma_{n_{1,0}}}}_{ T (d)=\{\sigma_{d,0}, \cdots, \sigma_{d,n_d}\}}\right)
\end{align*}
be an element on the right hand side of \eqref{eq:n-tree-inclusion}. Then, proving \eqref{eq:n-tree-inclusion} is equivalent to finding distinct points $\{y^\tau\}_{\tau\in T\setminus \{0\}}$ of $K\times \widetilde{K}$ such that
\begin{equation}\label{eq:n-tree-2} 
    \vec{l}_T= (|y^\tau - y^\sigma|)_{\tau \sim \sigma}.
\end{equation}
We again proceed inductively on the depth of trees. For the base case, suppose that $T$ is a tree of depth 1. Then, for each $\sigma\in  T (1)$, since $y^0\in \mathcal{U}_0$ and since $l^{\sigma} \in \bigcap_{x\in \mathcal{U}_{0}} \Delta_x (K_{\sigma}\times K_{\sigma})$ by (iv), there exists $y^\sigma \in K_\sigma\times K_{\sigma}$ such that $l^{\sigma} = |y^0-y^\sigma|$. Thus, \eqref{eq:n-tree-2} holds for the case of depth 1. For the induction hypothesis, suppose that for every tree of depth at most $d$, there exists distinct points $\{y^\tau\}_{\tau \in T\setminus \{0\}}$ such that \eqref{eq:n-tree-2} holds. Let $T$ be a tree of depth $d+1$. Then, for each $\sigma\in  T (1)$, since $y^0\in \mathcal{U}_0$ and since $l^{\sigma} \in \bigcap_{x\in \mathcal{U}_{0}} \Delta_x (K_{\sigma}\times K_{\sigma})$ by (ii), there exists $y^\sigma \in K_\sigma\times K_{\sigma}$ such that $l^{\sigma} = |y^0-y^\sigma|$. Moreover, since $T$ with the root $0$ removed is a union of disconnected trees $\{T_\sigma: \sigma \in  T (1)\}$ of depth at most $d$, by the induction hypothesis, each $T_\sigma$ has distinct points $\{y^{\tau}\}_{\tau \in T(\sigma)\setminus \{\sigma\} }$ such that \eqref{eq:n-tree-2} holds. Thus, we have found the desired distinct points $\{y^\tau\}_{\tau \in T\setminus \{0\} }$ such that \eqref{eq:n-tree-2} holds. By induction on the depth of trees, \eqref{eq:n-tree-2} holds for all finite trees. This completes the proof.
\end{proof}

\begin{figure}
\centering
\includegraphics[scale=0.255]{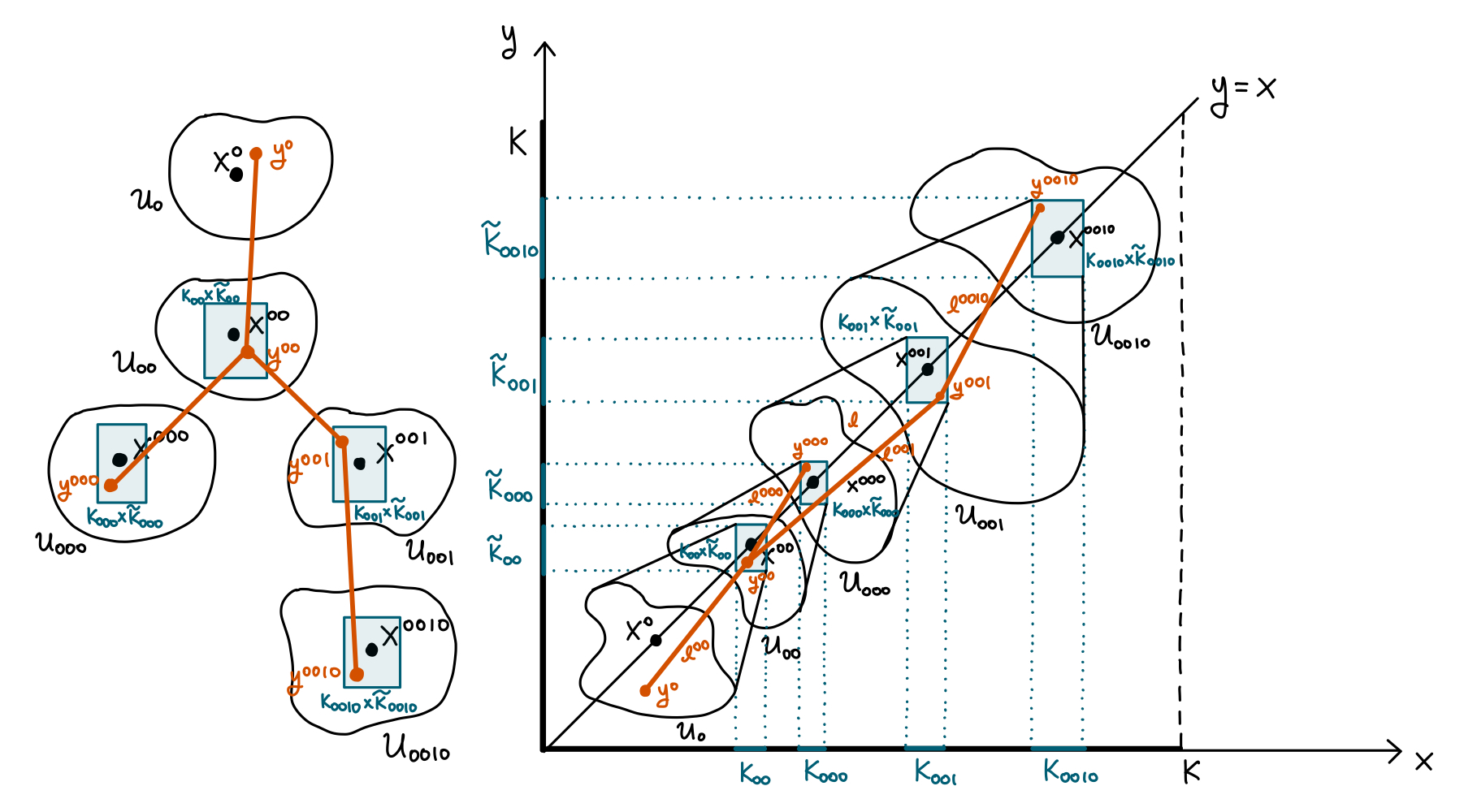}
\caption{An example of $5$-tree in the proof of Theorem \ref{thm:tree} in \S \ref{sec:tree}.}
\label{fig:5-tree}
\end{figure}

\section{Proof of Lemma \ref{lem:pin-wiggling-Rd}}\label{sec:lem-Rd}
\begin{proof}[Proof of Lemma \ref{lem:pin-wiggling-Rd}]
We use Theorem \ref{thm:jung-lai} analogously to the proof of Lemma \ref{lem:pin-wiggling}. Define
\begin{align*}
H:\R^{2d} \times \R^d \times \R^d \to \R^d : (z,x,y)\mapsto \left( \phi(z,(x,y)) , y_2, \cdots, y_d\right).
\end{align*}
The Jacobian determinant of $H$ with respect to $y$, whenever defined, is given by
\begin{align*}
\det \begin{pmatrix}
    \frac{\partial \phi(z, (x,y))}{\partial y_1} & \frac{\partial \phi(z, (x,y))}{\partial y_2} & \frac{\partial \phi(z, (x,y))}{\partial y_3} & \cdots & \frac{\partial \phi(z, (x,y))}{\partial y_d} \\
    0 & 1 & 0 & \cdots & 0 \\
    0 & 0 & 1 & \cdots & 0 \\
    \vdots & \vdots & \vdots & \ddots & 0 \\
    0 & 0 & 0 & \cdots & 1
\end{pmatrix}
=\frac{\partial \phi(z, (x,y))}{\partial y_1}.
\end{align*}
Let $k\in K\cap Q$. Since $\phi$ satisfies the derivative condition, there exists an open cube $\Lambda \subset \R^{2d}$ containing $v$ and an open cube $Q_2 \subset Q$ such that $k\in Q_2$ and
\begin{align*}
\frac{\partial \phi(z, (x,y))}{\partial y_1} \neq 0 \quad \forall z \in A \text{ and } \forall (x,y) \in Q_2 \times Q_2.
\end{align*}
This implies that the Jacobian of $H$ with respect to $y$ on $Q_2$ is invertible and the conditions of Theorem \ref{thm:jung-lai} are satisfied. For notational coherence, denote $Q_1=Q_2$. Then, since $k\in K\cap Q_1\neq \varnothing$, there exists a Cantor set $K_1 \subset K\cap Q_1$. By Theorem \ref{thm:jung-lai}, there exist a Cantor set $K_2\subset Q_2$ with $|K_2|>|K_1|$ and an open neighborhood $\mathcal{U}_v\subset \Lambda$ of $v$ such that
\begin{align*}
     \left(\bigcap_{z\in U_v} H(z, K_1, K_2)\right)^\circ \neq \varnothing.
\end{align*}
Taking $\widetilde{K}=K_2$ as our desired Cantor set, since $K_1 \subset K$, we have
\begin{align*}
    \left(\bigcap_{z\in U_v} \phi(z, K\times \widetilde{K})\right)^\circ=  \left(\bigcap_{z\in U_v} \pi_1 (H(z, K, \widetilde{K}))\right)^\circ \supset \pi_1 \left(\bigcap_{z\in U_v} H(z, K_1, K_2)\right)^\circ \neq \varnothing,
\end{align*}
where $\pi_1:\R^d\to \R$ is the projection map onto the first coordinate. Note that the last inclusion follows since projection is an open map and projection of an intersection is a subset of the intersection of projections. This completes the proof.
\end{proof}

\section{Proof of Theorem \ref{thm:tree-Rd}}\label{sec:tree-Rd}
\begin{proof}[Proof of Theorem \ref{thm:tree-Rd}]
The proof is a direct generalization of Theorem \ref{thm:tree}. Recall the claim in the proof of Theorem \ref{thm:tree} with the following (ii)$'$ in place of (ii) for each $\tau\in T\setminus \{0\}$:

\noindent (ii)$'$ $\left( \bigcap_{x\in \mathcal{U}_{\tau '}} \phi(x, K_\tau \times \widetilde{K}_\tau) \right)^\circ \neq \varnothing$, where $\tau'$ is the unique parent of $\tau$.

This version of the claim follows by using induction on the depth of trees in the proof of the original claim in Theorem \ref{thm:tree} with Lemma \ref{lem:pin-wiggling-Rd} in place of Lemma \ref{lem:pin-wiggling}. Returning to the main proof, let $T$ be a finite tree of depth $d$ and let $\{x^\tau \in K\times K: \tau \in T\}$ be distinct points. Using the claim above, define $\widetilde{K} = K\cup \  \bigcup_{\tau \in T\setminus \{0\}} \widetilde{K}_\tau$ and let $y\in \mathcal{U}_0 \cap (K\times \widetilde{K})$. The proof that $\left(\Delta^{\phi, T} (K\times \widetilde{K})\right)^\circ \neq \varnothing$ follows by using induction on the depth of trees in the same proof in Theorem \ref{thm:tree} with $\Delta^{\phi, T}$ in place of $\Delta^T$. This completes the proof.
\end{proof}

\section{Open questions}\label{sec:open-questions}
\begin{question}\label{q:1}
If $K\subset \R$ is a Cantor set, is there exists a Cantor set $\widetilde{K}$ such that $\Delta^T(K\times \widetilde{K})$ has nonempty interior for all finite trees $T$?
\end{question}
If the answer to Question \ref{q:1} is yes, then this provides a way to construct a Cantor set $C\subset \R^2$ with $\dim_{\rm H}=1$ such that $\Delta^T (C)$ has nonempty interior for all finite trees $T$. Note that by taking finite unions in Theorem \ref{thm:tree}, the answer to Question \ref{q:1} is yes for any finite collection of trees.

\begin{question}\label{q:2}
    For an odd integer $d\geq 3$ and for a finite tree $T$ with more than two vertices, can we construct $C\subset \R^d$ such that $\dim_{\rm H} (C)=\frac{d}{2}$ and $\Delta^T (C)$ has nonempty interior?
\end{question}
To our knowledge, it is unknown how to construct a Cantor set $C\subset \R^d$ for odd $d\geq 3$ such that $\dim_{\rm H} (C)=\frac{d}{2}$ and the pinned distance set $\Delta_x(C)$ has nonempty interior for some $x\in C$. We finish this section by asking several possible generalizations of tree distace sets.
\begin{question}
    Can we generalize our results (specifically, Theorem \ref{thm:tree-Rd} and Corollary \ref{cor:tree-hausdorff-dimension-zero-Rd}) to more general class of graphs?
\end{question}
These may include infinite chains and trees or graphs with loops, such as polygons. We may also impose requirements on the lengths of the distances like constant-gaps (see \cite{McDonald_Taylor_2023_infinite_chains}), such as equilateral triangles.

\smallskip

\noindent{\bf Acknowledgments.} K.T. is supported in part by the Simons Foundation Grant GR137264. The authors thank Eyvindur Palsson for helpful discussions related to this article.

\bibliographystyle{plain} 
\bibliography{refs}

\end{document}